\documentclass{amsart}

\usepackage{amsthm}
\usepackage{amsmath}
\usepackage{xfrac}

\newtheorem{prop}{Proposition}
\newtheorem{definition}[prop]{Definition}
\newtheorem{theorem}[prop]{Theorem}
\newtheorem{thm}[prop]{Theorem}
\newtheorem{cor}[prop]{Corollary}
\newtheorem{lemma}[prop]{Lemma}
\newtheorem{remark}[prop]{Remark}
\newtheorem{rem}[prop]{Remark}
\numberwithin{prop}{section}

\newcommand{\RR}{\ensuremath{\mathbb{R}}}
\newcommand{\T}{\ensuremath{\mathrm{T}}}
\newcommand{\NN}{\ensuremath{\mathbb{N}}}
\newcommand{\ZZ}{\ensuremath{\mathbb{Z}}}
\newcommand{\QQ}{\ensuremath{\mathbb{Q}}}
\newcommand{\Qp}{\ensuremath{\mathbb{Q}_{p}}}
\newcommand{\Zp}{\ensuremath{\mathbb{Z}_{p}}}

\newcommand{\Ok}{\ensuremath{\mathcal{O}_k}}
\newcommand{\thin}[1]{\ensuremath{#1}_{<\varepsilon}}
\newcommand{\thick}[1]{\ensuremath{#1}_{\ge \varepsilon}}
\newcommand{\HH}[1]{\ensuremath{\mathbb{H}^#1}}
\newcommand{\G}{\ensuremath{\mathbb{G}}}

\newcommand{\residue}[2]{\left(\frac{#1}{#2}\right)}
\newcommand{\hilbert}[2]{\left(#1,#2\right)}

\DeclareMathOperator{\md}{mod}
\DeclareMathOperator{\Gal}{Gal}

\title{Counting commensurability classes of hyperbolic manifolds}

\author{Tsachik Gelander and Arie Levit}

\begin{document}

\maketitle

\begin{abstract}
Gromov and Piatetski-Shapiro proved existence of finite volume non-arithmetic hyperbolic manifolds of any given dimension. In dimension four and higher, we show that there are about $v^v$ such manifolds of volume at most $v$, considered up to commensurability. Since the number of arithmetic ones tends to be polynomial, almost all hyperbolic manifolds are non-arithmetic in an appropriate sense. Moreover, by restricting attention to non-compact manifolds, our result implies the same growth type for the number of quasi-isometry classes of lattices in $\text{SO}(n,1)$.
Our method involves a geometric graph-of-spaces construction that relies on arithmetic properties of certain quadratic forms.
\end{abstract}

\section{Introduction}

Let $\rho_n(v)$ denote the number of complete $n$-dimensional hyperbolic manifolds without boundary of volume at most $v$, considered up to isometry. A classical theorem of Wang \cite{Wang} states that for $n\ge 4$, $\rho_n(v)$ is finite for every $v$. Wang's proof is non-effective and yields no estimate.
The first concrete upper bound was given by Gromov who showed that $\rho_n(v)\le v\cdot e^{e^{e^{n+v}}}$ (see \cite{Gr1}). The precise growth type of $\rho_n(V)$ was determined in \cite{BGLM} where it was shown that:

\begin{thm}\label{thm:BGLM}
Let $n \ge 4$. There are constants $a$ and $b$, depending on $n$, such that 
$$
 v^{av}\le\rho_n(v)\le v^{bv},
$$
whenever $v$ is sufficiently large.
\end{thm}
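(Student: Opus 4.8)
The plan is to establish the two inequalities by quite different arguments: the upper bound $\rho_n(v)\le v^{bv}$ from bounded local geometry of the thick part together with Mostow--Prasad rigidity, and the lower bound $\rho_n(v)\ge v^{av}$ from a geometric graph-of-spaces construction whose output is controlled by counting regular graphs. (The hypothesis $n\ge 4$ enters only through the upper bound, since for $n=3$ Dehn surgery already makes $\rho_3(v)$ infinite.)

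\emph{Upper bound.} Fix the Margulis constant $\varepsilon=\varepsilon(n)$ and let $M$ be a complete hyperbolic $n$-manifold with $\mathrm{vol}(M)\le v$; write $M=\thick{M}\cup\thin{M}$. Each component of $\thin{M}$ is a Margulis tube or a cusp neighbourhood, has volume at least some constant $c(n)>0$ --- so there are at most $v/c(n)$ of them --- and is determined up to homeomorphism by a bounded amount of boundary framing data. On $\thick{M}$ balls of a fixed radius $r=r(n)>0$ are embedded, so I choose a maximal $r$-separated subset $x_1,\dots,x_N\subset\thick{M}$: the balls $B(x_i,r/2)$ are pairwise disjoint and of volume at least some $c'(n)>0$, whence $N\le C(n)\,v$, while the Bishop--Gromov inequality bounds by a constant $D(n)$ the number of $x_j$ lying within distance $4r$ of a given $x_i$. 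Thus the incidence graph of the cover $\{B(x_i,2r)\}$ of $\thick{M}$ has at most $C(n)v$ vertices and maximal degree at most $D(n)$, so there are at most $N^{D(n)N}\le v^{O(v)}$ possibilities for it --- and this bounded-degree count is precisely where Gromov's triple-exponential estimate is improved. This incidence graph, decorated with the combinatorial patching data of the round balls (finitely many choices per graph, since only the isotopy type of each overlap matters) and the finite boundary data of the components of $\thin{M}$, determines $M$ up to homeomorphism; and by Mostow--Prasad rigidity ($n\ge 3$) the homeomorphism type determines the hyperbolic structure up to isometry. Hence $\rho_n(v)\le v^{bv}$ for all large $v$.

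\emph{Lower bound.} The idea is to fix one ``brick'' $W$ and glue $k$ copies of it in many combinatorially distinct ways. Starting from a quadratic form of signature $(n,1)$ over $\QQ$ and two rational hyperplanes, and passing --- using separability of totally geodesic subgroups --- to a finite cover in which the two associated totally geodesic hypersurfaces become embedded, disjoint, and with connected complement, one obtains by cutting along them a connected finite-volume hyperbolic $n$-manifold $W$ with totally geodesic boundary having $m\ge 3$ components; after a further generic choice we may assume $W$ has trivial isometry group, so that its $m$ boundary hypersurfaces are distinguishable inside $W$. Put $w=\mathrm{vol}(W)$. For each even $k$ take disjoint copies $W_1,\dots,W_k$ and glue their $mk$ boundary hypersurfaces in pairs by isometries; since the boundary is totally geodesic the resulting space $M_G$ is a complete hyperbolic $n$-manifold of volume $kw$ whose gluing loci are embedded totally geodesic hypersurfaces, and the gluing pattern is recorded by an $m$-regular multigraph $G$ on the $k$ brick-vertices (with the two identified faces labelled on each edge). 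The number of perfect matchings of $mk$ labelled objects is $(mk-1)!!=\exp\!\big(\tfrac{m}{2}k\log k+O(k)\big)$, and a generic one yields a connected asymmetric pattern; so the number of such patterns up to isomorphism is $\exp\!\big((\tfrac{m}{2}-1)k\log k-O(k)\big)$, which is super-exponential in $k$ precisely because $m\ge 3$.

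\emph{Separating the bricks, and conclusion.} By Mostow--Prasad rigidity it suffices to recover the gluing pattern, up to isomorphism, from the isometry type of $M_G$. Having chosen $W$ generically, one argues that every closed embedded totally geodesic hypersurface of $M_G$ isometric to a component of $\partial W$ is isotopic to one of the gluing hypersurfaces; cutting $M_G$ along these recovers its decomposition into $k$ copies of $W$, and since $\mathrm{Isom}(W)$ is trivial there is no remaining ambiguity, so the pattern is recovered up to isomorphism. Hence distinct patterns give non-isometric manifolds and $\rho_n(kw)\ge\exp\!\big((\tfrac{m}{2}-1)k\log k-O(k)\big)$; writing $v=kw$ this is $\ge v^{av}$ for a suitable $a=a(n)>0$, and since every large $v$ lies in an interval $[kw,(k+1)w)$ the bound holds for all $v$ large. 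I expect the recovery step --- equivalently, the essential uniqueness of the graph-of-spaces (Bass--Serre) splitting of $M_G$ with brick pieces $W$ and walls the components of $\partial W$ --- to be the main obstacle, and it is what dictates the delicate generic choice of $W$; on the upper-bound side the one subtle point is making precise that the bounded combinatorial datum genuinely reconstructs the homeomorphism type of $M$, the cusps included.
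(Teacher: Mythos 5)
A preliminary remark: Theorem \ref{thm:BGLM} is not proved in this paper --- it is quoted from \cite{BGLM}; the introduction only recalls that the upper bound there (see also \cite{HV}) comes from a finite complex of volume-controlled complexity capturing $\pi_1$, and the lower bound from counting covers of a single manifold, while Remark \ref{rem:const-a} notes that the graph-of-spaces construction developed here gives an alternative proof of the lower bound. Measured against either argument, your proposal has a genuine gap, and it sits exactly where you suspect it does: the recovery step in the lower bound. Gluing $k$ copies of a single brick $W$ along $m$-regular patterns produces enough patterns, but you must recover the pattern from the isometry type of $M_G$, and the two statements you invoke for this --- that a ``generic choice'' makes $\mathrm{Isom}(W)$ trivial, and that every embedded totally geodesic hypersurface of $M_G$ isometric to a component of $\partial W$ is one of the gluing walls --- are asserted, not proved, and cannot be obtained by genericity: by rigidity there is no parameter to vary, $W$ is cut out of an arithmetic manifold (which typically contains infinitely many totally geodesic hypersurfaces and has a large commensurator), and with a single brick all pieces of $M_G$ are isometric, so nothing distinguishes the walls metrically. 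This is precisely the difficulty the existing proofs are designed to avoid or to solve: \cite{BGLM} recovers no decomposition at all --- it fixes one manifold whose fundamental group surjects onto $F_2$, counts finite-index subgroups via Theorem \ref{thm:growth_of_subgroups_of_free_group}, and controls overcounting by Mostow rigidity --- while the present paper makes recovery possible by taking the blocks inside six pairwise non-commensurable arithmetic manifolds, so that Lemma \ref{lemma:twononcommensurablearedisjoint} (resting on the Zariski-density argument of \cite{G-PS} and Proposition \ref{prop:proper_submanifold_has_finite_volume}) forces any common cover, in particular any isometry, to respect the block structure (Proposition \ref{prop:twoMGammasarenotcommensurable}); producing such blocks is the arithmetic content of Section \ref{sec:constructing_building_blocks}. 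With a single brick none of this machinery is available, so your ``separating the bricks'' paragraph is a missing proof, not a technicality. (Raimbault \cite{Raimbault} and this paper also show that distinguishability of the pieces genuinely requires number-theoretic input.)

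On the upper bound, your outline (thick--thin decomposition, an $r$-net in the thick part, a bounded-degree incidence graph on at most $C(n)v$ vertices, hence at most $v^{O(v)}$ combinatorial possibilities, then rigidity) is the strategy of \cite{BGLM, HV}, but the step ``this bounded datum determines $M$ up to homeomorphism, with finitely many patching and thin-part framing choices'' is exactly the technical heart and is only asserted. In \cite{BGLM} one instead proves that the thick part is homotopy equivalent to the nerve of a carefully chosen cover (this requires real work on the intersections of the balls) and applies Mostow--Prasad to $\pi_1$ rather than reconstructing a homeomorphism type; and the claim that a thin component is determined by boundedly many choices of boundary data is precisely what fails in dimension $3$ (Dehn filling) and what must be proved for $n\ge 4$. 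So the upper-bound sketch follows the correct route, but its key lemma is likewise left unproved.
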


The upper bound in \cite{BGLM} (see also \cite{HV}) was proved by constructing a finite simplicial complex that captures the low dimensional homotopy (in particular the fundamental group) whose complexity is controlled by the volume.\footnote{By Mostow's rigidity theorem, the fundamental group determines the manifold up to isometry.}
The lower bound in \cite{BGLM} was established by considering various covers of a single hyperbolic manifold with a large fundamental group. More refined questions regarding the number of minimal hyperbolic manifolds, as well as the number of commensurability classes, had remained unsolved.
 
A complete hyperbolic manifold without boundary is said to be {\it minimal} if it does not properly cover any other manifold, or equivalently if its fundamental group (when acting on the universal cover $\mathbb{H}^n$ via deck transformations) is a maximal torsion-free discrete subgroup of $\text{PO}(n,1)\cong\text{Isom}(\mathbb{H}^n)$. Let $M_n(v)$ denote the number of minimal hyperbolic $n$-manifolds of volume at most $v$, up to isometry. 

Two manifolds are said to be {\it commensurable} if they admit a common finite cover, or equivalently if their fundamental groups admit conjugates whose intersection has finite index in both. Commensurability is an equivalence relation.
We denote by $C_n(v)$ the number of commensurability classes of hyperbolic manifolds  
admitting a representative of volume $\le v$. Note that clearly 
$$
 C_n(v)\le M_n(v)\le\rho_n(v).
$$

To be more precise, write $C_n(v) = C_n^\textrm{c}(v) + C_n^\textrm{nc}(v)$ where $C_n^\textrm{c}(v)$ and $C_n^\textrm{nc}(v)$ denote respectively the number of compact and non-compact complete hyperbolic $n$-manifolds of volume at most $v$, considered up to commensurability.

In his beautiful paper \cite{Raimbault}, J. Raimbault established an exponential lower bound for $C_n^\textrm{c}$, namely that $C_n^\textrm{c}(v)\ge \alpha^v$ for sufficiently large $v$, where $\alpha$ is some constant depending on $n$. 

We prove that the growth type of both $C_n^\textrm{c}$ and $C_n^\textrm{nc}$ is as large as possible:

\begin{theorem}\label{thm:main}
For an appropriate constant $a = a(n) >0$, depending on $n$, we have that both
$$
 C_n^\textrm{c}(v)>v^{av}\quad \text{and}\quad C_n^\textrm{nc}(v)>v^{av}
$$
for all $v$ sufficiently large.
\end{theorem}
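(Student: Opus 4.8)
The plan is to realize $v^{\Theta(v)}$ pairwise non-commensurable complete hyperbolic $n$-manifolds of volume at most $v$ as \emph{geometric graphs of spaces} assembled from a fixed finite family of arithmetic building blocks, in the spirit of the Gromov--Piatetski-Shapiro interbreeding construction, but arranged so that non-arithmeticity and the relevant commensurability invariants persist under arbitrary gluings.

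\textit{Building blocks.} Following Gromov--Piatetski-Shapiro, fix two quadratic forms $f_1, f_2$ of signature $(n,1)$ that agree on a common hyperplane and are chosen --- over $\QQ$ for the non-compact statement, over a suitable totally real field with the correct local behaviour for the compact one --- so that the arithmetic lattices $\Lambda_1, \Lambda_2$ they define are \emph{non-commensurable} while sharing an isometric totally geodesic hypersurface. Passing to finite-index torsion-free subgroups and to suitable finite covers, extract for $j = 1,2$ a connected finite-volume hyperbolic $n$-manifold $W_j$ with non-empty totally geodesic boundary, all of whose boundary components are isometric to one fixed closed (resp.\ cusped) hyperbolic $(n-1)$-manifold $Y$, with a common number $m \ge 3$ of boundary components; the interiors of $W_1$ and $W_2$ stay arithmetic and non-commensurable. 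Record once and for all the finite-cover-invariant arrangement of closed totally geodesic hypersurfaces internal to each $W_j$.

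\textit{The construction.} Given a finite bipartite $m$-regular graph $G$ with parts $P_1, P_2$, form $M_G$ by placing a copy of $W_j$ at each vertex of $P_j$ and gluing, across each edge, the corresponding boundary components by the fixed isometry of $Y$ (also fixing, at each vertex, a bijection between incident edges and boundary components, which only multiplies the eventual count by a bounded-per-vertex factor). Then $\mathrm{vol}(M_G) = |P_1|\,\mathrm{vol}(W_1) + |P_2|\,\mathrm{vol}(W_2)$, so $|V(G)| \asymp v$ gives $\mathrm{vol}(M_G) \le v$, while the number of such $G$ on $\asymp v$ vertices, up to isomorphism, is already $v^{\Theta(v)}$. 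Because $G$ is bipartite, every gluing hypersurface of $M_G$ has non-commensurable arithmetic pieces on its two sides, so the Gromov--Piatetski-Shapiro argument shows $M_G$ is non-arithmetic; with the cusped blocks this, combined with Schwartz's quasi-isometric rigidity, also yields the corresponding growth of quasi-isometry classes of lattices in $\mathrm{SO}(n,1)$.

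\textit{Non-commensurability, and the obstacle.} By Margulis's commensurator theorem each non-arithmetic $M_G$ covers a unique minimal orbifold $O_G = \HH{n}/\mathrm{Comm}(\pi_1 M_G)$, and $M_{G_1}$ is commensurable to $M_{G_2}$ precisely when $O_{G_1} \cong O_{G_2}$. The technical heart is to prove that the graph-of-spaces decomposition of $M_G$ is \emph{commensurability-canonical}: the gluing hypersurfaces admit an intrinsic characterization among all closed totally geodesic hypersurfaces of $M_G$ --- they are exactly the ``interbred'' ones in the Gromov--Piatetski-Shapiro sense, a property no hypersurface internal to an arithmetic piece $W_j$ enjoys --- so this family is preserved by isometries and by passage to finite covers. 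Consequently the decomposition descends to $O_G$, presenting $O_G$ as a graph of orbi-blocks over a graph $\Gamma_G$ with $G \to \Gamma_G$ a covering of graphs. Restricting to the \emph{reduced} graphs $G$ --- those admitting no non-trivial symmetry and no non-trivial such covering onto a smaller graph, of which there are still $v^{\Theta(v)}$ --- one gets $\Gamma_G = G$, so $O_{G_1} \cong O_{G_2}$ forces $G_1 \cong G_2$; hence distinct reduced $G$ give non-commensurable $M_G$, and $C_n^{\textrm{c}}(v), C_n^{\textrm{nc}}(v) \ge v^{\Theta(v)} > v^{av}$ for a sufficiently small $a = a(n) > 0$. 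I expect the main difficulty to be exactly this canonicity: arithmetic pieces contain infinitely many totally geodesic hypersurfaces, so one must show that the internal ones form a commensurability-fixed pattern which never mimics a Gromov--Piatetski-Shapiro mismatch and that the gluings create no new ``interbred'' hypersurfaces, so that the combinatorial skeleton $G$ can genuinely be read off from the commensurability class of $M_G$.
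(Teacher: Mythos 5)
Your outline reproduces the general strategy (graphs of spaces built from non-commensurable arithmetic blocks with isometric totally geodesic boundaries, volume linear in the number of vertices, $v^{\Theta(v)}$ graphs), but the step that actually proves non-commensurability is missing, and you say so yourself: the ``commensurability-canonicity'' of the decomposition --- that the gluing hypersurfaces are intrinsically characterized, preserved by finite covers and by the commensurator, so that the minimal orbifold $O_G$ inherits a graph structure $\Gamma_G$ covered by $G$ --- is exactly the hard point, and it is asserted, not proved. The paper deliberately avoids having to prove any such canonical decomposition of covers (it even remarks that covers of $M_\Delta$ do \emph{not} in general decompose as graphs of spaces over graphs). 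Instead it uses \emph{six} pairwise non-commensurable arithmetic blocks $V_0,V_1,A^{\pm},B^{\pm}$, so that both the vertex $2$-coloring and the edge labels/orientations of a decorated Schreier graph of the free group $F_2$ are geometrically encoded; the key technical input is a disjointness lemma (via Gromov--Piatetski-Shapiro 1.6, Zariski density of the relevant fundamental groups, and a finiteness-of-volume statement for totally geodesic submanifolds to handle cusps) saying that in any common cover the preimages of blocks lying in non-commensurable arithmetic manifolds have disjoint interiors. Given that, a loop in $\Delta_1$ whose label lies in $H_1\setminus H_2$ is traced by a path in $M_{\Delta_1}$, lifted to the common cover, and pushed to $M_{\Delta_2}$, where its endpoint lands in a block of the wrong type --- a purely local argument that never needs the cover or the commensurator quotient to carry a graph-of-spaces structure.

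Two further points make your route, as stated, unlikely to close without essentially reinventing that machinery. First, with only two block types and edges glued by one fixed isometry of $Y$, the combinatorial data you hope to recover is an unlabeled $2$-colored $m$-regular bipartite graph; by Angluin--Gardiner/Leighton any two such graphs admit a common finite colored cover, so the necessary condition that commensurable $M_G$'s yield graphs with a common cover distinguishes nothing, and unless the ``port'' structure (which boundary component of $W_j$ meets which edge) is geometrically detectable --- which you do not argue, and which the paper achieves precisely by using the extra non-commensurable edge blocks $A^{\pm},B^{\pm}$ --- non-isomorphic graphs may well give commensurable manifolds. Second, even granting canonicity, your reduction to ``reduced'' graphs does not finish the argument: the map $G=T/\pi_1(M_G)\to \Gamma_G=T/\mathrm{Comm}$ is a quotient by a larger group acting on the Bass--Serre tree, i.e.\ at best a covering of graphs of groups, which can fold edges and is not a topological graph covering; ruling out nontrivial graph coverings onto smaller graphs and nontrivial automorphisms of $G$ does not force $\Gamma_G=G$. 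So the non-commensurability claim, and with it the lower bound, is not established by the proposal; the paper's combination of the six-block decoration, the disjointness lemma, and the Schreier-graph path-tracing (plus the arithmetic construction of six pairwise non-commensurable forms agreeing on a hyperplane, in both the isotropic and anisotropic cases) is what fills this gap.
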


Similarly, let $M_n^\text{c}(v)$ (resp. $M_n^\text{nc}(v)$) stand for the number of compact (resp. non-compact) minimal hyperbolic $n$-manifolds of volume at most $v$, up to isometry.

\begin{cor}\label{cor:m_n}
Let $n \ge 4$. For all $v$ sufficiently large:
$$
 v^{av}\le C_n^\text{c}(v)\le M_n^\text{c}(v)\le \rho_n(v)\le v^{bv},
$$
and 
$$
 v^{av}\le C_n^\text{nc}(v)\le M_n^\text{nc}(v)\le \rho_n(v)\le v^{bv}.
$$
\end{cor}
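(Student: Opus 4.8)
The plan is to reduce the corollary to Theorem~\ref{thm:main}, which is the substantial content; the corollary itself is essentially bookkeeping. Recall the chain of inequalities already observed in the introduction, $C_n(v) \le M_n(v) \le \rho_n(v)$, and note that the same argument respects the compact/non-compact dichotomy: a finite cover of a compact manifold is compact and a finite cover of a non-compact finite-volume manifold is non-compact, so commensurability preserves (non-)compactness. Hence $C_n^{\text{c}}(v) \le M_n^{\text{c}}(v) \le \rho_n(v)$ and likewise in the non-compact case, since every minimal manifold is in particular a manifold, and every commensurability class with a representative of volume $\le v$ contains a minimal representative of volume $\le v$ (pass to the maximal torsion-free discrete overgroup of the fundamental group, which has volume no larger — here one should remark that such a maximal overgroup exists for $n \ge 4$ by Wang finiteness, or more directly by the fact that the normalizer chain stabilizes).

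First I would state explicitly that $C_n^{\text{c}}$ and $M_n^{\text{c}}$ count only compact objects while $C_n^{\text{nc}}$ and $M_n^{\text{nc}}$ count only non-compact ones, and that commensurability together with finite covering cannot mix the two classes; this justifies the two separate chains. Second, I would invoke Theorem~\ref{thm:main} to supply the left-hand bound $v^{av} \le C_n^{\text{c}}(v)$ and $v^{av} \le C_n^{\text{nc}}(v)$ for $v$ large. Third, I would invoke Theorem~\ref{thm:BGLM} (the upper bound of \cite{BGLM}, also \cite{HV}) for the right-hand bound $\rho_n(v) \le v^{bv}$; note this upper bound makes no compactness distinction, so it dominates both $M_n^{\text{c}}$ and $M_n^{\text{nc}}$ at once. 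Assembling these three inputs with the trivial inequalities $C_n^{\text{c}} \le M_n^{\text{c}} \le \rho_n$ and $C_n^{\text{nc}} \le M_n^{\text{nc}} \le \rho_n$ yields both displayed chains.

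There is no real obstacle here: the only point requiring a word of care is the middle inequality $C_n^{\star}(v) \le M_n^{\star}(v)$, i.e. that every commensurability class with a representative of volume $\le v$ contains a \emph{minimal} representative of volume $\le v$. For this one observes that if $\Gamma \le \text{PO}(n,1)$ is torsion-free discrete of covolume $\le v$, then by local rigidity and Wang's theorem there are only finitely many discrete subgroups between $\Gamma$ and its commensurator containing $\Gamma$, so a maximal torsion-free discrete overgroup $\Gamma'$ exists; its covolume is at most that of $\Gamma$, hence $\le v$, and the quotient $\HH{n}/\Gamma'$ is a minimal manifold in the same commensurability class. Since the constants $a$ and $b$ from Theorems~\ref{thm:main} and \ref{thm:BGLM} already depend on $n$, the corollary follows verbatim, and one should simply remark that $a$ may need to be shrunk to the minimum of the two constants produced for the compact and non-compact cases in Theorem~\ref{thm:main}.
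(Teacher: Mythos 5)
Your proposal is correct and follows essentially the same route the paper takes (implicitly, since the corollary is left as immediate bookkeeping): the lower bound from Theorem~\ref{thm:main}, the upper bound from Theorem~\ref{thm:BGLM}, and the chain $C_n^{\star}(v)\le M_n^{\star}(v)\le\rho_n(v)$ noted in the introduction, with compactness preserved under commensurability. Your extra paragraph justifying that every class has a minimal representative of volume $\le v$ (maximal torsion-free discrete overgroups exist because covolumes are bounded below, so ascending chains stabilize) is a correct elaboration of the point the paper treats as clear.
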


A rougher equivalence relation on manifolds is given by quasi-isometry of their fundamental groups. In particular, all compact hyperbolic $n$-manifolds belong to a single equivalence class. 
Let us denote by $\text{QI}_n(v)$ the number of torsion free lattices in $G=\text{PO}(n,1)$ of co-volume at most $v$, considered up to quasi-isometry
(recall that these lattices are always finitely generated). 
A celebrated result of R.E. Schwartz \cite{Sc} states that two non-uniform lattices in $G$ are quasi-isometric if and only if they admit commensurable conjugates. Thus $\text{QI}_n(v)=M_n^\text{nc}(v)$, and the non-uniform version of the above result yields the following:

\begin{thm}
Let $n \ge 4$. There are constants $a$ and $b$, depending on $n$, such that 
$$v^{av}\le \text{QI}_n(v)\le v^{bv},$$
for all $v$ sufficiently large.
\end{thm}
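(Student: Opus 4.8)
The plan is to deduce the final statement as an essentially formal consequence of Theorem \ref{thm:main} together with the two rigidity results already invoked in the excerpt, namely Mostow rigidity and Schwartz's quasi-isometric rigidity for non-uniform lattices in $G = \text{PO}(n,1)$. Recall that $\text{QI}_n(v)$ counts torsion-free lattices in $G$ of co-volume at most $v$ up to quasi-isometry. By Schwartz's theorem \cite{Sc}, two non-uniform such lattices are quasi-isometric precisely when they have commensurable conjugates; on the other hand, every uniform lattice in $G$ is quasi-isometric to $\mathbb{H}^n$ itself (being quasi-isometric to $G$, or directly since the Cayley graph is quasi-isometric to the universal cover), so all uniform lattices collapse to a single quasi-isometry class. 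Consequently the quasi-isometry classes of torsion-free lattices of co-volume $\le v$ are: one distinguished class containing all the uniform ones, together with one class for each commensurability class of non-uniform lattices admitting a representative of co-volume $\le v$. Since a torsion-free lattice is the same datum as the fundamental group of a complete hyperbolic $n$-manifold of the corresponding volume (acting by deck transformations on $\mathbb{H}^n$), and commensurability of lattices corresponds to commensurability of manifolds, we obtain the identity $\text{QI}_n(v) = C_n^{\text{nc}}(v) + 1$, which also matches the identification $\text{QI}_n(v) = M_n^{\text{nc}}(v)$ asserted in the excerpt (a minimal non-compact manifold being a canonical representative — the maximal lattice — in its commensurability class, up to the finitely many possibilities arising from the finite index of any lattice in its commensurator, which does not affect growth type).

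The upper bound $\text{QI}_n(v) \le v^{bv}$ is then immediate from the chain $\text{QI}_n(v) = M_n^{\text{nc}}(v) \le \rho_n(v) \le v^{bv}$, where the first inequality holds because each quasi-isometry class of non-uniform lattice is represented by at least one non-compact minimal hyperbolic manifold of volume $\le v$, and the last is the upper bound of Theorem \ref{thm:BGLM}. For the lower bound, I would simply invoke Theorem \ref{thm:main}: we have $\text{QI}_n(v) = C_n^{\text{nc}}(v) + 1 > v^{av}$ for all sufficiently large $v$, since distinct commensurability classes of non-compact hyperbolic $n$-manifolds are non-quasi-isometric by Schwartz's theorem, hence give distinct quasi-isometry classes. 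Thus the same constant $a = a(n)$ furnished by Theorem \ref{thm:main} works here.

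I do not anticipate a genuine obstacle in this argument — it is a packaging of known equivalences — but the one point requiring care is the precise bookkeeping between lattices, commensurability classes, and minimal manifolds, i.e. verifying that the passage from $C_n^{\text{nc}}$ to $\text{QI}_n$ neither loses nor gains more than an additive or bounded multiplicative constant, which is negligible against the $v^{av}$ and $v^{bv}$ bounds. One should also note that Schwartz's rigidity applies to the $G = \text{PO}(n,1) \cong \text{Isom}(\mathbb{H}^n)$ setting in all dimensions $n \ge 3$ (and in particular for $n \ge 4$ as needed), so there is no dimension restriction beyond the one already present in Theorem \ref{thm:main} and Theorem \ref{thm:BGLM}. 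Hence the stated double inequality $v^{av} \le \text{QI}_n(v) \le v^{bv}$ follows for all sufficiently large $v$, completing the proof.
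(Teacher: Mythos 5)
Your argument is correct and is essentially the paper's own: apply Schwartz's quasi-isometric rigidity to identify quasi-isometry classes of non-uniform lattices with commensurability classes of non-compact manifolds, then feed in the lower bound of Theorem \ref{thm:main} and the upper bound $\rho_n(v)\le v^{bv}$ from Theorem \ref{thm:BGLM}. Your extra bookkeeping (the single quasi-isometry class of uniform lattices contributing an additive constant, and the comparison of $C_n^{\text{nc}}$ with $M_n^{\text{nc}}$) is a harmless refinement of the identification $\text{QI}_n(v)=M_n^{\text{nc}}(v)$ used in the paper and does not change the approach.
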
 

\begin{rem}
Although the lower bound in Theorem \ref{thm:BGLM} (of \cite{BGLM}) was originally obtained using covers of a single non-arithmetic manifold, it was shown in \cite{BGLS} that the same bound is achieved when considering covers of an arithmetic manifold with a large fundamental group.\footnote{Moreover, this manifold could be taken to be compact as well as non-compact.} That is to say, the number of arithmetic and non-arithmetic manifolds (considered up to isometry) have the same growth type. 
On the contrary, the number of minimal arithmetic manifolds is bounded above by $v^{\beta (\log v)^\epsilon}$ as shown by M. Belolipetsky \cite{Be}, and in fact is expected to depend polynomially on $v$.\footnote{The polynomial upper bound was confirmed in \cite{Be} for the number of non-compact minimal arithmetic manifolds.}
It was perhaps somewhat unexpected that the number $C_n(v)$ of non-commensurable manifolds has the same growth type as the total number of hyperbolic manifolds $\rho_n(v)$. Theorem \ref{thm:main} (and in a weaker sense also Raimbault's result) implies that, counted up to commensurability, there are plenty more non-arithmetic manifolds than arithmetic ones --- i.e. super-exponential compared to (almost) polynomial. 
\end{rem}

\begin{rem}
Our argument gives in particular an alternative proof for the lower bound in Theorem \ref{thm:BGLM}. Recall that this lower bound has some applications in 
theoretical physics \cite{Ca1,Ca2}. 
\end{rem}

\begin{rem}
In dimension $2$, Teichmuller theory provides uncountably many non-isometric compact as well as non-compact complete hyperbolic surfaces of any given genus $\ge 2$, and by the Gauss--Bonnet theorem the genus and the number of cusps determine the area. Since commensurability classes are countable, it follows that there are uncountably many non-commensurable compact as well as non-compact hyperbolic surfaces of a fixed area. 

In dimension $3$, Thurston constructed a sequence of (compact as well as non-compact) complete hyperbolic manifolds whose volume strictly increases but remains bounded (see \cite[Ch. E]{BePe}). By Borel's theorem \cite{Bor} (see also \cite{BGLS}) only finitely many of these manifolds are arithmetic. Moreover, the values of the volume function restricted to a non-arithmetic commensurability class are integer multiple of a constant, namely the co-volume of the commensurability group which is discrete by Margulis' criterion \cite[Theorem 1, p. 2]{Ma}. It follows that there are infinitely many non-commensurable compact as well as non-compact complete hyperbolic $3$-manifolds of bounded volume. 

Since the number of bounded volume arithmetic manifolds is finite also in dimension $2$ and $3$ (see \cite{Bor}, or \cite{BGLM} for a quantitative version) the declaration that {\it most} manifolds are non-arithmetic holds in these cases in a much stronger sense.  
\end{rem}


{\bf Inspirations and ideas:} 
As in Gromov--Piatetski-Shapiro classical construction \cite{G-PS}, consider two non-commensurable arithmetic hyperbolic manifolds $A$ and $B$ with two totally geodesic boundary components each, such that all four boundary components are isometric to each other. Such ingredients were used in \cite[Section 13]{Samurais} to construct random hyperbolic manifolds of infinite volume, that in turn produce exotic Invariant Random Subgroups in $\text{SO}(n,1)$ demonstrating the abundance of IRS, in contrary to the IRS rigidity which is possessed by higher rank groups. This was done by gluing randomly chosen copies of $A$ and $B$ along a bi-infinite line. The idea of J. Raimbault's \cite{Raimbault} was to close that line to a "loop" after finitely many steps, i.e. to take $m$ random pieces of type $A$ and $B$ and glue them along a circle. In this way he obtained $2^m$ manifolds of volume bounded by $m\cdot\max\{\text{vol}(A),\text{vol}(B)\}$ and showed that many of them are non-commensurable, providing an exponential lower bound for $C_n^\textrm{c}(v)$.

In order to obtain $v^{av}$ manifolds, we glue pieces as above along general graphs rather than circles. Thus, we consider building blocks, such as $A$ and $B$, but with more than $2$ boundary components. The difficulty lies in producing such examples which are non-commensurable. Indeed, just like circles, every two finite $d$-regular graphs are commensurable by Angluin--Gardiner's theorem \cite{AG}. Moreover, by coloring the vertices and edges using finitely many colors, one expands every graph only to exponentially many colored ones. By Leighton's theorem \cite{Le} two colored graphs are commensurable if and only if they admit isomorphic covering colored trees. In addition, a cover of a manifold created in this way following the pattern of a finite graph does not necessarily correspond to a cover of the graph.

As in \cite{BGLM}, we appeal again to the super-exponential subgroup growth of the free group $F_2$. To every Schreir graph of $F_2$ we associate a manifold constructed from isometric copies of finitely many (we use six) building blocks. We then show that manifolds associated to non-isomorphic Schreir graphs are non-commensurable.  
Moreover, we produce one admissible parcel of compact building blocks, and another one consisting of non-compact building blocks,
establishing the desired lower bounds on $C_n^\textrm{c}(v)$ as well as on $C_n^\textrm{nc}(v)$.

\medskip

\noindent
{\bf Acknowledgement.}
We wish to thank the referee for valuable comments on the first manuscript, Zlil Sela for valuable suggestions, and
Tomer Schlank for pointing out to us the relevance of the Chebotarev density theorem to Lemma \ref{lem:infinitely_many_primes_as_needed}.
 The research was supported in part by the ISF and the ERC. 


%
%
%
%
%

\section{A combinatorial prelude}
\subsection{Schreir and decorated graphs}
To illustrate the combinatorics involved in the construction consider the following baby case scenario:

Let $F$ denote the free group on $\{a,b\}$. Recall that the \textit{Schreir graph} $\Gamma_H$ corresponding to a subgroup $H\le F$ is the quotient of the Cayley graph of $F$ by the natural action of $H$. Thus, a Schreir graph is a $4$-regular graph with oriented edges that are labeled by the set  $\{a^{\pm 1},b^{\pm{1}}\}$ in the obvious way. 

Given a finite combinatorial path $\gamma$ in $\Gamma_H$ let $l(\gamma)\in F$ denote the labeling along $\gamma$. It follows immediately from the definitions that a finite path $\gamma$ beginning at the vertex $H$ is a loop if and only if $l(\gamma)\in H$.

We will make use of decorated graphs, which is the following variant of Schreir graphs:

\begin{definition}
\label{def:decorated_graph}
A \textit{decorated graph} is a $4$-regular graph $\Gamma$ with oriented edges labeled admissibly by $\{a^{\pm 1},b^{\pm{1}}\}$ whose vertices are $2$-colored
(we will regard one of the vertex colors as transparent --- referring to each vertex as either colored or not).

A \textit{covering map} of decorated graphs is a topological graph covering that preserves both the edge orientations and labels, and the vertex coloring.
\end{definition}

Thus every Schreir graph $\Gamma_H$ corresponds to $2^{|F:H|}$ decorated graphs. The following proposition demonstrates the benefit of considering decorated graphs:

\begin{prop}
\label{prop:decoratedhavenocommoncovers}
Let $\Gamma_1$ and $\Gamma_2$ be two finite decorated graphs, each having a single colored vertex. If  $\Gamma_1$ and $\Gamma_2$ are not isomorphic then they do not have a common decorated cover.
\end{prop}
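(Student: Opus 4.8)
The plan is to reinterpret the statement in terms of subgroups of $F$ via the standard correspondence between based, connected, admissibly-labeled $4$-regular graphs and subgroups of the free group $F$ on $\{a,b\}$ (the one recalled above, under which paths and their labels encode cosets), using crucially that the single colored vertex of each $\Gamma_i$ furnishes a canonical basepoint. Suppose for contradiction that $\Gamma_1$ and $\Gamma_2$ admit a common decorated cover $\Gamma$, with covering maps $p_i\colon\Gamma\to\Gamma_i$. First I would reduce to the case that $\Gamma$ is connected: since a covering map preserves colors, the fiber $p_1^{-1}(v_1)$ over the colored vertex $v_1$ of $\Gamma_1$ is nonempty and consists of colored vertices of $\Gamma$; replacing $\Gamma$ by the connected component containing one such vertex $\tilde v$, the restrictions of $p_1,p_2$ remain coverings onto $\Gamma_1,\Gamma_2$ (a component of a graph covering surjects as a covering onto the connected base).

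Next I would pin down the basepoints. Because each $p_i$ is color-preserving and $v_i$ is the \emph{unique} colored vertex of $\Gamma_i$, the colored vertex $\tilde v$ of $\Gamma$ must satisfy $p_i(\tilde v)=v_i$. Taking $\tilde v$ as basepoint of $\Gamma$ and $v_i$ as basepoint of $\Gamma_i$, the correspondence identifies $\Gamma$ with the Schreier graph $\Gamma_K$ of some $K\le F$ and $\Gamma_i$ with $\Gamma_{H_i}$ for some $H_i\le F$, and the basepoint-preserving maps $p_i$ with the inclusions $K\le H_i$; in particular $K\le H_1\cap H_2$.

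Finally I would extract $H_1=H_2$ from the coloring data. Under $\Gamma\cong\Gamma_K$ the set of colored vertices is one fixed subset of the coset space $K\backslash F$; computed through $p_1$, a coset $Kg$ is colored exactly when $p_1(Kg)=v_1$, i.e.\ when $g\in H_1$ (here $K\le H_1$ ensures the full preimage in $F$ of the colored set is precisely $H_1$), while computed through $p_2$ the colored cosets are exactly $\{Kg: g\in H_2\}$. Equating the two descriptions and using $K\le H_1\cap H_2$ forces $H_1=H_2$: for $h\in H_1$ one has $Kh=Kg'$ with $g'\in H_2$, so $h\in Kg'\subseteq H_2$, and symmetrically. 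Hence $\Gamma_1$ and $\Gamma_2$ are both isomorphic, as decorated graphs, to the Schreier graph of $H:=H_1=H_2$ with only the trivial coset colored, contradicting the hypothesis that they are non-isomorphic. The heart of the argument — and the only subtle point — is exactly this basepoint rigidity supplied by the single colored vertex: without it, a common cover would merely force $H_1$ and $H_2$ to be subconjugate into a common subgroup, which is far weaker than $H_1=H_2$.
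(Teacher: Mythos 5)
Your proof is correct, and it is essentially the paper's argument in a slightly more global packaging: the unique colored vertex pins the basepoints, the decorated covers translate into the subgroup picture for the free group $F$, and the coloring data forces $H_1=H_2$. The paper reaches the same contradiction more locally --- rather than identifying the common cover with a Schreier graph $\Gamma_K$ with $K\le H_1\cap H_2$ and comparing colored fibers, it lifts a single loop based at the colored vertex whose label lies in $H_1\setminus H_2$ and observes the color clash at the endpoint of its two projections.
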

\begin{proof}
Since $\Gamma_1$ and $\Gamma_2$ have a single colored vertex each, we may regard them as $\Gamma_{H_i}$ for some finite-index subgroup $H_i \le F,~i=1,2$. Since the two graphs are not isomorphic we have that $H_1 \neq H_2$ as subgroups of $F$.

Assume by way of contraction that $\Gamma_1$ and $\Gamma_2$ have a common decorated cover  $\bar{\Gamma}$ with covers $p_i : \bar{\Gamma} \to \Gamma_i$. Consider some loop $\gamma$ in $\Gamma_1$ based at the colored vertex such that $l(\gamma)\in H_1 \setminus H_2$. Let $\bar{\gamma}$ be a lift of $\gamma$ to $\bar{\Gamma}$ and let $x\in\bar{\Gamma}$ denote the end-point of $\bar{\gamma}$. Note that the end-point of $p_1 \circ \bar{\gamma} = \gamma$ is colored since $\gamma$ is a loop, while the end-point of $p_2 \circ \bar{\gamma}$ is not colored since $l(\gamma) \notin H_2$. But both end-points are  covered by $x\in\bar{\Gamma}$, and $p_1$ and $p_2$ were assumed to preserve the decorated structure.

\end{proof}

The above technique, modified accordingly, is used below to construct non-commensurable hyperbolic manifolds. The lower bound of Theorem \ref{thm:main} relies on the following well known (see Ch. 2 of \cite{lubotzky2003subgroup}):

\begin{theorem}
\label{thm:growth_of_subgroups_of_free_group}
Let $a_n$ denote the number of subgroups of index $n$ in the free group $F$ on two generators. Then
$ a_n \ge n^{\frac{n}{2}} $ for every $n$.
\end{theorem}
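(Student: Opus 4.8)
The plan is to count index-$n$ subgroups of $F$ through their coset actions. Recall the classical dictionary: a subgroup $H\le F$ of index $n$ gives the transitive left action of $F$ on the $n$-element coset space $F/H$ together with the basepoint $eH$, and conversely the stabilizer of the basepoint in any pointed transitive $F$-action on an $n$-element set has index $n$; this is a bijection between index-$n$ subgroups of $F$ and isomorphism classes of pointed transitive $F$-sets of size $n$. Since $F$ is free on $\{a,b\}$, such an action on $\{1,\dots,n\}$ with basepoint $1$ is nothing but a pair of permutations $(\sigma_a,\sigma_b)\in S_n\times S_n$ with $\langle\sigma_a,\sigma_b\rangle$ transitive, and two such pairs yield isomorphic pointed actions exactly when they are simultaneously conjugate by some $\pi\in S_n$ with $\pi(1)=1$ (equivalently by an element of the point stabilizer $S_{n-1}$).

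To get a lower bound I would restrict to the subfamily in which $\sigma_a=(1\,2\,\cdots\,n)$ is the standard $n$-cycle and $\sigma_b\in S_n$ is arbitrary. Such a pair is automatically transitive, since $\sigma_a$ alone acts transitively, so it determines an index-$n$ subgroup $H_{\sigma_b}$. The point is that distinct $\sigma_b$'s give distinct subgroups: if $H_{\sigma_b}=H_{\sigma_b'}$ then the associated pointed actions are isomorphic, so there is $\pi\in S_n$ with $\pi(1)=1$, $\pi\sigma_a\pi^{-1}=\sigma_a$, and $\pi\sigma_b\pi^{-1}=\sigma_b'$; but the centralizer of an $n$-cycle in $S_n$ is the cyclic group $\langle\sigma_a\rangle$ of order $n$, whose only element fixing $1$ is the identity, so $\pi=\mathrm{id}$ and $\sigma_b=\sigma_b'$. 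This yields $|S_n|=n!$ pairwise distinct index-$n$ subgroups, hence $a_n\ge n!$. This distinctness step is really the only delicate point — one must make sure that ``different $\sigma_b$'' translates into ``different subgroup'', i.e. that no nontrivial relabelling of $\{1,\dots,n\}$ carries one of our actions to another — and it is exactly the rigidity of the $n$-cycle (the smallness of its centralizer) that makes it work; everything else is bookkeeping.

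It remains to observe the elementary inequality $n!\ge n^{n/2}$, valid for every $n\ge 1$: pairing the factor $k$ with the factor $n+1-k$ in the product $(n!)^2=\prod_{k=1}^{n}k(n+1-k)$ and using $k(n+1-k)-n=(k-1)(n-k)\ge 0$ shows each paired factor is at least $n$, so $(n!)^2\ge n^{n}$. Combined with $a_n\ge n!$ this gives $a_n\ge n^{n/2}$ for all $n$, completing the proof. (Alternatively one could simply invoke the known asymptotic $a_n\sim n\cdot n!$ recorded in Chapter 2 of \cite{lubotzky2003subgroup}, but the self-contained argument above is shorter and suffices for the stated bound.)
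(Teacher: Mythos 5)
Your proof is correct, and in fact gives the stronger bound $a_n\ge n!$: the dictionary between index-$n$ subgroups and pointed transitive actions is set up properly, and the key distinctness step is handled exactly right --- the centralizer of the $n$-cycle $\sigma_a$ in $S_n$ is $\langle\sigma_a\rangle$, whose only element fixing the basepoint is the identity, so distinct choices of $\sigma_b$ really do yield distinct subgroups; the elementary estimate $(n!)^2=\prod_{k=1}^{n}k(n+1-k)\ge n^n$ then finishes the argument. The paper itself gives no proof, citing Ch.~2 of \cite{lubotzky2003subgroup}, where the same correspondence with transitive permutation representations underlies M.~Hall's formula and the asymptotic $a_n\sim n\cdot n!$; your restriction to pairs whose first coordinate is an $n$-cycle is a clean, self-contained way to extract the needed lower bound from that circle of ideas.
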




\section{Graphs of Spaces}\label{sec:GoS}
We aim to implement the above combinatorial scheme in the context of hyperbolic manifolds. 

\medskip

{\bf The building blocks:} Assume we are given six manifolds with boundaries $V_0, V_1, A^+, A^-, B^+, B^-$ such that:
\begin{itemize}
\item Each is a complete real hyperbolic $n$-dimensional manifold of finite volume with totally geodesic boundary.
\item $V_0$ and $V_1$ have $4$ boundary components each, while $A^\pm, B^\pm$ have $2$ boundary components each.
\item Every boundary component of any of the above manifolds is isometric to a fixed $(n-1)$-dimensional complete finite-volume manifold $N$.
\item The six manifolds are embedded in respective six manifolds without boundary, that are arithmetic and pairwise non-commensurable.
\end{itemize}

\begin{definition}[Manifolds supported by decorated graphs]
\label{MGamma}
Given a decorated graph $\Delta$, we let $M_\Delta$ denote a manifold obtained by associating a copy of either $V_0$ or $V_1$ for each vertex in $\Delta$ according to its color, and a copy of the pair $A^+$ and $A^-$ or the pair $B^+$ and $B^-$ for every edge of $\Delta$ according to its label and orientation, and gluing them according to the graph incidence relation by identifying corresponding isometric copies of $N$.

We refer to the isometric copies of $V_0, \ldots , B^-$ inside $M_\Delta$ as the building block submanifolds.
\end{definition}

\begin{remark}
\label{rem:details_of_gluing}

$(i)$
More precisely, an uncolored (resp. colored) vertex corresponds to a building block of type $V_0$ (resp. $V_1$). Given an edge of type, say, $a^{+1}$ between two vertices, we glue $A^-$ and $A^+$ to each other and then attach this ordered pair between the two corresponding vertex spaces.

$(ii)$
Note that $M_\Delta$ is not uniquely defined, as there is some freedom in deciding which boundary components of incident pieces are glued to one another and in choosing an identifying isometry between these components. This ambiguity will not be of any harm in subsequent consideration. One may regard $M_\Delta$ as an arbitrarily selected representative from finitely many possibilities, or alternatively specify additional gluing rules that remove the ambiguity.

$(iii)$
This construction is sometimes referred to as a "graph of spaces".
\end{remark}

Clearly if the graph $\Delta$ is finite then the resulting manifold $M_\Delta$ is of finite volume. More precisely, if $\Delta$ has $k$ vertices then 
$\text{Vol}(M_\Delta)\le 5k\cdot \mathcal{V}$ where
$ \mathcal{V}$ is the maximum of the volumes of the six building blocks.

We discuss below both the case where every building block is compact with compact boundary, and the case where the building blocks are non-compact. In the later case also the boundary components are not compact\footnote{When $n\ge 5$, it can be deduced from the Hasse--Minkowski principle that every properly embedded totally geodesic co-dimension one sub-manifold of a non-compact arithmetic manifold is also non-compact.}. Clearly $M_\Delta$ will be compact if and only if every constituent building block is. In both cases $M_\Delta$ is a complete manifold without boundary.

The following is an analog of Proposition \ref{prop:decoratedhavenocommoncovers}. A small complication lies in the fact that covers of $M_\Delta$ do not, in general, decompose as graphs of spaces over regular graphs. 

\begin{prop}
\label{prop:twoMGammasarenotcommensurable}
Let $\Delta_1$ and $\Delta_2$ be two finite decorated graphs, each having a single colored vertex. If  $\Delta_1$ and $\Delta_2$ are not isomorphic then the manifolds $M_{\Delta_1}$ and $M_{\Delta_2}$ are not commensurable.
\end{prop}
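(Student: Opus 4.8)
The plan is to reduce the commensurability of $M_{\Delta_1}$ and $M_{\Delta_2}$ to a statement about graphs, and then apply Proposition~\ref{prop:decoratedhavenocommoncovers}. Suppose $M_{\Delta_1}$ and $M_{\Delta_2}$ were commensurable, so they admit a common finite cover $\widetilde M$ with covering maps $q_i\colon\widetilde M\to M_{\Delta_i}$. The first and most important step is to show that this $\widetilde M$ is \emph{itself} a graph of spaces built from the same six building blocks, over some finite decorated graph $\widetilde\Delta$, in a way compatible with the coverings. The key geometric input is that each building block submanifold of $M_{\Delta_i}$ sits inside an arithmetic manifold, and the six ambient arithmetic manifolds are pairwise non-commensurable; this should force the preimage of a building block submanifold under a covering map to be a disjoint union of isometric copies of the \emph{same} building block (up to the finitely many gluing ambiguities noted in Remark~\ref{rem:details_of_gluing}). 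The separating totally geodesic hypersurfaces isometric to $N$ that cut $M_{\Delta_i}$ into its pieces lift to separating totally geodesic hypersurfaces in $\widetilde M$ cutting it into such pieces. One then reads off a graph $\widetilde\Delta$: vertices are the lifted $V_0$/$V_1$-pieces, edges are the lifted $A^\pm$/$B^\pm$-pieces, with colors and $\{a^{\pm1},b^{\pm1}\}$-labels inherited from which building block and which boundary identification occurs. One must check $\widetilde\Delta$ is $4$-regular and admissibly labeled, so it is a genuine decorated graph, and that $q_i$ descends to a covering map of decorated graphs $p_i\colon\widetilde\Delta\to\Delta_i$.

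A subtlety flagged in the statement is that a cover of $M_{\Delta_i}$ need not a priori respect the graph-of-spaces decomposition — e.g. a lifted piece could be an $A^+$ glued to an $A^-$ that, from the graph's point of view, does not look like a single $a$-labeled edge, or the vertex pieces could fail to be connected. The way around this is to observe that the building blocks are distinguished among themselves combinatorially: $V_0,V_1$ have four boundary components and the $A^\pm,B^\pm$ have two, and the four types of pairs $A^+A^-$, $B^+B^-$ (and their orientation-reversals) are distinguished by which of the six non-commensurable arithmetic manifolds they embed in. So a connected component of the preimage of, say, the $A^+$-piece must be a single copy of $A^+$ (it covers $A^+$, but $A^+$ embeds in an arithmetic manifold non-commensurable with the others, and a connected cover of a totally geodesic bordered piece inside an arithmetic manifold that glues up correctly must be trivial — here one uses that $N$ itself, being a fixed hyperbolic $(n-1)$-manifold, has no unexpected self-identifications forced on it). Granting this, $\widetilde\Delta$ is well-defined and $4$-regular because local degree is preserved, and the labels and colors are pulled back consistently.

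Once $\widetilde\Delta$ is in hand with decorated coverings $p_i\colon\widetilde\Delta\to\Delta_i$, the finite graph $\widetilde\Delta$ may have several colored vertices; but pick any colored vertex $\widetilde v$ of $\widetilde\Delta$, let $\widetilde\Delta'$ be the connected decorated graph obtained by keeping $\widetilde v$ colored and making all other vertices transparent, and note $\widetilde\Delta'$ still covers both $\Delta_1$ and $\Delta_2$ as decorated graphs after we likewise pass to the based covers — more carefully, restrict to the connected component and use that $\Delta_1,\Delta_2$ each have a \emph{single} colored vertex, so the image of $\widetilde v$ is forced. This exhibits a common decorated cover of $\Delta_1$ and $\Delta_2$ in the sense of Definition~\ref{def:decorated_graph}, contradicting Proposition~\ref{prop:decoratedhavenocommoncovers} since $\Delta_1\not\cong\Delta_2$. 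Hence $M_{\Delta_1}$ and $M_{\Delta_2}$ are not commensurable.

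The main obstacle, and where the real work lies, is the first step: proving that an arbitrary finite cover $\widetilde M$ genuinely decomposes as a graph of spaces over a decorated graph covering the $\Delta_i$. This is exactly the rigidity that the arithmetic and non-commensurability hypotheses on the ambient manifolds are designed to supply, and making it precise will require the separating-hypersurface structure of $M_{\Delta_i}$, Mostow rigidity to pin down isometry types, and a careful argument that no "mixing" of building block types can occur in a cover.
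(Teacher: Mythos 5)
There is a genuine gap, and it is precisely at the step you yourself identify as ``where the real work lies.'' You assert that the common finite cover $\widetilde M$ decomposes as a graph of spaces over a decorated graph $\widetilde\Delta$, built from \emph{isometric copies} of the same six building blocks, so that the covering maps descend to decorated graph coverings and Proposition~\ref{prop:decoratedhavenocommoncovers} applies. But a connected component of the preimage of a building block under a finite cover is a finite cover of that block, not an isometric copy of it, and nothing forces this cover to be trivial: a finite-index subgroup of $\pi_1(M_{\Delta_1})$ will in general intersect the conjugates of $\pi_1(V_0)$, $\pi_1(A^+)$, etc.\ in \emph{proper} finite-index subgroups. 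The non-commensurability of the six ambient arithmetic manifolds only prevents pieces of different types from overlapping (this is the content of Lemma~\ref{lemma:twononcommensurablearedisjoint}); it says nothing about the restricted covers being trivial, and your parenthetical argument (``a connected cover of a totally geodesic bordered piece inside an arithmetic manifold that glues up correctly must be trivial'') is not a proof and is false as stated. Consequently the lifted vertex pieces may have $4d$ boundary components, $\widetilde\Delta$ need not be $4$-regular, the preimages of the copies of $N$ need not be isometric to $N$ (nor separating), and the induced map to $\Delta_i$ need not be a graph covering --- so Proposition~\ref{prop:decoratedhavenocommoncovers} cannot be invoked. The paper explicitly flags this: ``covers of $M_\Delta$ do not, in general, decompose as graphs of spaces over regular graphs.''

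The paper's proof circumvents this by never building $\widetilde\Delta$. It uses Lemma~\ref{lemma:twononcommensurablearedisjoint} only to conclude that for each point $x$ of the common cover, the \emph{type} of building block containing $\pi_1(x)$ agrees with that containing $\pi_2(x)$. Then, writing $\Delta_i=\Delta_{H_i}$ and choosing a loop $\gamma$ based at the colored vertex with $l(\gamma)\in H_1\setminus H_2$, one realizes $\gamma$ by a closed path $c_\gamma$ in $M_{\Delta_1}$ starting in the interior of the $V_1$-piece and crossing the copies of $N$ transversely, lifts it to the common cover, and pushes it down to $M_{\Delta_2}$; the pointwise agreement of types forces the image path to trace $\gamma$ in $M_{\Delta_2}$ as well, so it ends in a $V_0$-piece while $c_\gamma$ ends in the $V_1$-piece --- a contradiction. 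Your reduction to the combinatorial Proposition~\ref{prop:decoratedhavenocommoncovers} captures the right intuition, but to make your argument work you would essentially have to replace the unproved decomposition claim by this path-tracing argument (or something equivalent to it).
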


We shall require two results on hyperbolic manifolds:

\begin{prop}
\label{prop:proper_submanifold_has_finite_volume}
Let $M$ be an $n$-dimensional complete finite volume hyperbolic manifold without boundary and let $N$ be a properly embedded totally geodesic $k$-dimensional sub-manifold with $1<k\le n$. Then $N$ has finite volume.
\end{prop}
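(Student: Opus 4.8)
The plan is to exploit the fact that a properly embedded totally geodesic submanifold is geodesically complete in its own right, and that completeness plus properness forces finite volume in the hyperbolic setting. More concretely, I would argue as follows. Since $N$ is totally geodesic in $M$, any geodesic of $N$ is a geodesic of $M$; since $M$ is complete, such a geodesic is defined for all time, and since $N$ is \emph{properly embedded} (hence closed as a subset of $M$), the geodesic stays in $N$ for all time. Thus $N$ is itself a complete hyperbolic $k$-manifold. The universal cover of $N$ is therefore $\HH{k}$, isometrically embedded as a totally geodesic copy of $\HH{k} \subset \HH{n}$, and $\pi_1(N)$ acts on it as a discrete torsion-free subgroup of $\mathrm{Isom}(\HH{k})$; it thus suffices to show this lattice has finite covolume, equivalently that $N$ has finite volume.

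The key step is to produce a collar: a properly embedded totally geodesic hypersurface-type object in a complete hyperbolic manifold of finite volume admits a tubular neighborhood in $M$ which is (at least over the thick part) bi-Lipschitz to $N \times (-\delta,\delta)$ for some uniform $\delta > 0$. This uniformity of the normal injectivity radius is what I expect to be the main obstacle: near the cusps of $M$ the submanifold $N$ could a priori wind around and its normal exponential map could fail to be injective at a scale bounded below. However, because $N$ is properly embedded and $M$ is complete, the normal injectivity radius of $N$ in $M$ is a positive continuous function on $N$, and one can show it is bounded below by a constant depending only on the geometry of the thin parts of $M$ (using that in a Margulis tube or cusp neighborhood a totally geodesic submanifold is forced into a very restricted position — essentially a ``vertical'' geodesic subspace in upper half-space coordinates). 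Granting a lower bound $\delta > 0$ on this normal injectivity radius, the $\delta$-neighborhood $U$ of $N$ in $M$ has volume at least $c(\delta, k, n) \cdot \mathrm{Vol}(N)$ by the tube formula in hyperbolic geometry (the volume of a $\delta$-tube around a totally geodesic $k$-submanifold is a fixed positive multiple of the $k$-volume of the submanifold). Since $U \subseteq M$ and $\mathrm{Vol}(M) < \infty$, we conclude $\mathrm{Vol}(N) < \infty$.

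An alternative, perhaps cleaner, route avoids the uniform collar: pass to the thick-thin decomposition $M = \thick{M} \cup \thin{M}$ for a Margulis constant $\varepsilon = \varepsilon(n)$. On the thick part the injectivity radius of $M$ is bounded below by $\varepsilon$, and since $N$ is totally geodesic the injectivity radius of $N$ at a point of $N \cap \thick{M}$ is also at least $\varepsilon$; hence $N \cap \thick{M}$ admits an embedded $\varepsilon$-tube in $\thick{M}$ and so has volume at most $\varepsilon^{-k}$-proportional to $\mathrm{Vol}(\thick{M}) \le \mathrm{Vol}(M) < \infty$. It remains to bound $\mathrm{Vol}(N \cap \thin{M})$. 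Each component of $\thin{M}$ is either a Margulis tube or a cusp neighborhood; in suitable coordinates it is a quotient of a horoball or a tube around a geodesic by a virtually abelian group, and a totally geodesic properly embedded submanifold intersects such a piece in a union of totally geodesic submanifolds of the model, each of which has finite volume because $N$ meets the \emph{boundary} horosphere/tube-boundary (which lies in $\thick{M}$) in a finite-volume set and the ``vertical'' direction contributes only exponentially decaying or finite extra volume. Summing the (finitely many relevant, since $N$ is proper and $M$ has finitely many cusps) pieces gives $\mathrm{Vol}(N) < \infty$. Either way, the heart of the matter is controlling the geometry of $N$ inside the thin parts of $M$, and the hypothesis $k > 1$ is used exactly to ensure $N$ is not a single geodesic line shooting straight out a cusp — in which case ``finite volume'' would be vacuous but the argument via tubes still needs $N$ to be a genuine manifold with its own thick-thin structure.
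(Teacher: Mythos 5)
Your first route fails at exactly the step you yourself flag as the main obstacle: there is in general \emph{no} uniform lower bound on the normal injectivity radius of $N$ in $M$. Writing a cusp as $\Gamma_0\backslash\{x_0\ge L\}$ in upper half-space coordinates, the preimage of $N$ contains a vertical $k$-dimensional subspace $R$ through $\infty$ together with its translates $\gamma R$ for $\gamma\in\Gamma_0$ not stabilizing $R$; these are parallel vertical subspaces, and the hyperbolic distance from a point of $R$ at height $x_0=t$ to such a translate is comparable to $c/t\to 0$ as $t\to\infty$. (Already for a totally geodesic surface running up a torus cusp of a cusped hyperbolic $3$-manifold, the annular end of $N$ comes arbitrarily close to itself high in the cusp.) So the ``restricted position'' of $N$ in the thin part produces the opposite of a collar: no $\delta$-tube is embedded for any fixed $\delta>0$, and the inequality $\mathrm{Vol}(M)\ge c(\delta)\,\mathrm{Vol}(N)$ is not available. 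Using a decaying radius $\delta(x)$ instead only bounds $\int_N \delta\, dV_N$, which is strictly weaker than finiteness of $\mathrm{Vol}(N)$, so this route cannot be patched in any direct way.

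Your second route is, in outline, the proof given in the paper (thick--thin decomposition, reduction to the cusps, finite cross-section plus decay in the vertical direction, with $k>1$ entering exactly where you say it does), but two steps need repair or filling. For the thick part you again conflate the intrinsic injectivity radius of $N$ (which is indeed at least the injectivity radius of $M$ at the point, by total geodesy) with the normal injectivity radius of $N$ in $M$; the claimed embedded $\varepsilon$-tube need not exist, since distant sheets of $N$ may pass close to one another inside $\thick{M}$. No tube is needed: properness of the embedding together with compactness of $\thick{M}$ makes $N\cap\thick{M}$ compact, hence of finite volume. For the cusp part, the actual content that your sketch only gestures at is: a noncompact component of $N\cap C$ arises from a lift $R$ having $\infty$ in its visual boundary; by properness its cross-section $R\cap\{x_0=L\}$ descends to a compact, hence finite Euclidean volume, quotient under $\Gamma_0\cap\Gamma_N$; and the hyperbolic volume form induced on $R$ is $d\mu/x_0^{k}$, whose integral over $\{x_0\ge L\}$ against a finite-measure cross-section converges precisely because $k>1$. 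With these two points made precise, your second argument coincides with the paper's proof.
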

\begin{proof}
Let $M = \thin{M} \cup \thick{M}$ be the thick-thin decomposition of $M$; namely $\thin{M}$ (resp. $\thick{M}$) consists of those points with injectivity radius $< \frac{\varepsilon}{2}$ (resp. $\ge \frac{\varepsilon}{2}$) (see \cite[Sec 4.5]{Thu} for details). The thick part $\thick{M}$ is compact and, assuming that $\varepsilon$ is sufficiently small,  the thin part $\thin{M}$ is a disjoint union of finitely many ``cusps". Since $N$ is properly embedded, it is enough to show that its intersection with every cusp has finite volume. Let $C$ be a cusp for which $N\cap C$ is not compact.

Fix a monodromy for $\Gamma = \pi_1(M)$ so that $\Gamma$ acts on $\HH{n}$. We consider the upper half-space model $\{(x_0,\ldots,x_{n-1}) \in \RR^n \, | \, x_0 > 0\}$ for $\HH{n}$ and assume that the point at infinity $\infty$ is mapped to the visual limit of $C$. Let $\Gamma_0 = \textrm{Stab}_\Gamma(\infty)$ be the corresponding parabolic subgroup. Since $N$ is totally geodesic in $M$ its preimage in the universal cover $\HH{n}=\tilde M$ is a union of $k$-dimensional subspaces (lifts) each of them is $\Gamma$-precisely invariant. Our assumption implies that one of these lifts, say $R \subset \HH{n}$, contains the point $\infty$ in its visual boundary. Let $\Gamma_N$ be the subgroup of $\Gamma$ that leaves $R$ invariant, so that $N$ is isometric to  $\Gamma_N \setminus R$.

Using the compactness of the thick part we can find some $L>0$ such that the horoball $\{x_0 \ge L\}$ contains a preimage of $C$. Note that  $\Gamma_0$ preserves the horosphere $S = \{x_0 = L\}$ and the (Euclidean) $\Gamma_0$-action on $S$ is co-compact. 

Finally, observe that $N' = R \cap S$ is a $(k-1)$-dimensional subspace that is invariant under $\Gamma' = \Gamma_0 \cap \Gamma_N$. Since $N$ is properly embedded, the quotient $\Gamma' \setminus N'$ is compact and hence of finite Euclidean volume. Since the pre-image of $N \cap C$ is contained in the horoball $\RR^{\ge L} \times S$, by staring at the formula of the volume form $dv={(d\mu)}/{x_0^k}$, where $d\mu$ is the Euclidean volume form induced on $R$ from the half space model $\mathbb{R}^{n,+}$, one sees immediately that if $k>1$ the hyperbolic volume of $N\cap C$ is finite. 
\end{proof}


The next lemma is a generalization of Lemma 13.10 of \cite{Samurais} (see also Lemma 3.3 of \cite{Raimbault}) that will allow us to deal with non-compact pieces as well.  
We also take this opportunity to explain the argument for the compact case in greater details.

\begin{lemma}
\label{lemma:twononcommensurablearedisjoint}
Let $X_1$ and $X_2$ be two submanifolds with totally geodesic boundary and finite volume inside two non-commensurable arithmetic hyperbolic $n$-manifolds. Assume either that $\partial X_1$ and $\partial X_2$ are compact and $n \ge 3$, or that $\partial X_1$ and $\partial X_2$ have finite volume and $n \ge 4$.

If $W$ is any complete hyperbolic manifold with two embedded submanifolds $U_1, U_2 \hookrightarrow W$ that admit finite isometric covers $p_i : U_i \to X_i$ for $i=1,2$, then the intersection $U_1 \cap U_2$ has an empty interior.
\end{lemma}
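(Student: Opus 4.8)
The plan is to argue by contradiction: suppose the interior of $U_1 \cap U_2$ is nonempty, so it contains an open set $\Omega$ of $W$. I would first pass to universal covers. Fix a monodromy realizing $W$ as $\Gamma \backslash \HH{n}$ for a discrete torsion-free $\Gamma$. The preimage of $U_i$ in $\HH{n}$ is a disjoint union of convex pieces, each of which is a copy of $\tilde X_i$ bounded by totally geodesic hyperplanes; pick lifts $\tilde\Omega \subset \tilde U_1 \cap \tilde U_2$ with $\tilde U_i$ a single such piece sitting inside $\HH{n}$. Since $X_i$ is arithmetic, the subgroup $\Gamma_i \le \mathrm{Isom}(\HH{n})$ generated by (the fundamental group of $\tilde U_i$ together with) reflections in the boundary hyperplanes of $\tilde U_i$ is a lattice commensurable with the arithmetic group defining the ambient manifold containing $X_i$ — here one uses that a finite-volume hyperbolic manifold with totally geodesic boundary embeds in a closed (or finite-volume) arithmetic manifold and that the reflection group generated this way is arithmetic and commensurable with it. This is where Proposition \ref{prop:proper_submanifold_has_finite_volume} enters: it guarantees the relevant boundary hypersurfaces, and their intersections with cusp cross-sections, have finite volume, so the reflection groups are genuinely finite-covolume lattices rather than just discrete groups.

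Next I would exploit the open overlap. Because $\tilde\Omega$ is a common open subset of $\tilde U_1$ and $\tilde U_2$, and both $\Gamma_1$ and $\Gamma_2$ act on $\HH{n}$ with the property that $\tilde U_i$ is a fundamental-domain-like region for (a finite-index piece of) $\Gamma_i$, I want to conclude that $\Gamma_1$ and $\Gamma_2$ are commensurable as subgroups of $\mathrm{Isom}(\HH{n})$. The mechanism: any element of $\Gamma$ whose translate of $\tilde U_1$ meets $\tilde\Omega$ in an open set must in fact... actually the cleaner route is to observe that $\Gamma_1 \cap \Gamma_2$ still acts on the common geodesically convex region and, using that reflections in a common wall lie in both groups when the walls coincide, build enough common elements. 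More robustly: since the interiors overlap, some hyperplane bounding $\tilde U_1$ either coincides with one bounding $\tilde U_2$ or crosses into the interior of $\tilde U_2$; by developing the reflection groups across walls one shows the group generated by $\Gamma_1$ and $\Gamma_2$ is still discrete (it acts properly on $\HH{n}$ because everything happens inside the $\Gamma$-action on $W$'s universal cover), hence by Margulis' arithmeticity/commensurator criterion all three are commensurable, contradicting non-commensurability of the ambient arithmetic manifolds. The dimension hypotheses ($n \ge 3$ compact, $n \ge 4$ finite-volume) are exactly what is needed so that the boundary hypersurfaces have dimension $\ge 2$ and Proposition \ref{prop:proper_submanifold_has_finite_volume} applies to give finite-volume (not just discrete) reflection groups, and so that Mostow–Margulis rigidity is available.

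The cleanest packaging of the middle step is probably: let $\Lambda$ be the subgroup of $\mathrm{Isom}(\HH{n})$ generated by $\Gamma_1$ and $\Gamma_2$. I claim $\Lambda$ is discrete. Indeed, consider the path component of $\bigl(\tilde U_1 \cup \bigcup_{\gamma \in \Gamma}\gamma \tilde U_1\bigr) \cap \bigl(\tilde U_2 \cup \cdots\bigr)$ — better, work directly in $W$: the overlap $U_1 \cap U_2$ being open with nonempty interior means the "developing" of the two tilings (by copies of $X_1$ and by copies of $X_2$) into $\HH{n}$ are compatible on an open set, and since each tiling is locally finite and face-to-face after passing to the reflection-doubled orbifolds, a standard analytic-continuation argument forces the two tilings to agree globally, so $\Gamma_1$ and $\Gamma_2$ have a common finite-index subgroup. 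Then $X_1$ and $X_2$ would be commensurable, hence so would the ambient arithmetic manifolds, the desired contradiction.

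\medskip

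\noindent\textbf{Main obstacle.} The hard part is the rigidity of the two tilings, i.e. showing that an open overlap of the copies of $X_1$ and $X_2$ propagates to a global matching of the reflection groups $\Gamma_1, \Gamma_2$ — equivalently, that $\langle \Gamma_1, \Gamma_2\rangle$ remains discrete. In the compact case (Lemma 13.10 of \cite{Samurais}) one can use that the doubles $DX_i$ are closed arithmetic manifolds and a compactness/geodesic-completeness argument; the genuinely new difficulty here is the non-compact case, where cusps of $X_1$ and $X_2$ may interact, the reflection groups are only finite-covolume, and one must rule out, e.g., a cusp of the $X_1$-tiling running parallel to a face of the $X_2$-tiling without the groups being commensurable. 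This is precisely where Proposition \ref{prop:proper_submanifold_has_finite_volume} and the hypothesis $n \ge 4$ are used: the cusp cross-sections of the boundary hypersurfaces are themselves finite-volume manifolds of dimension $\ge 2$, which lets one control the parabolic subgroups and run the analytic-continuation/discreteness argument through the cusps as well as through the thick part.
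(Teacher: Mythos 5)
There is a genuine gap, and it sits exactly where you flag your ``main obstacle'': the passage from an open overlap to commensurability is left entirely unproved, and the two ingredients you offer for it do not work as stated. First, the discreteness of $\langle \Gamma_1,\Gamma_2\rangle$ does not follow from the remark that ``everything happens inside the $\Gamma$-action on $W$'s universal cover'': the reflection groups and ambient arithmetic lattices you attach to $\tilde U_1,\tilde U_2$ are not subgroups of $\pi_1(W)$ --- only $\pi_1(U_i)$ is --- so properness of the $\Gamma$-action says nothing about them. Second, the claim that ``a standard analytic-continuation argument forces the two tilings to agree globally'' is precisely the content of the lemma, not a standard fact: an open overlap of one copy of $\tilde X_1$ with one copy of $\tilde X_2$ does not propagate across walls unless one already controls the configuration in which a boundary wall of $U_1$ crosses the interior of $U_2$, and that configuration is never analyzed in your sketch. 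Finally, your use of Proposition \ref{prop:proper_submanifold_has_finite_volume} (to make reflection groups have finite covolume) is not what the dimension hypotheses are for.

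The paper avoids tilings and discreteness of generated groups altogether. By the criterion of \cite[1.6]{G-PS} (see also \cite[Section 13]{Samurais}), it suffices to show that if $U_1\cap U_2$ has nonempty interior then the monodromy of the fundamental group of a suitable component is Zariski-dense: a Zariski-dense group sitting (up to commensuration) in two arithmetic groups forces the ambient manifolds to be commensurable, the desired contradiction. Then one splits into two cases. If no boundary component of one $U_i$ crosses the interior of the other, a component $V$ of $(U_1\cap U_2)\setminus(\partial U_1\cup\partial U_2)$ is a finite-volume manifold with totally geodesic boundary, so $\pi_1(V)$ is Zariski-dense in $\text{SO}(n,1)$ by \cite[Section 0.1]{G-PS}. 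Otherwise, take components $S_i$ of $\partial U_i\cap U_{3-i}$, with corresponding subgroups $H_i\cong\text{SO}(n-1,1)$; a boundary component $Q_i\subset\partial U_1\cap\partial U_2$ of $S_i$ has $\pi_1(Q_i)$ of infinite index in $\pi_1(S_i)$ by \cite[Lemma 1.7A]{G-PS}, and $Q_i$ has finite volume by Proposition \ref{prop:proper_submanifold_has_finite_volume} (this, not reflection groups, is where $n\ge 4$ enters: $Q_i$ is $(n-2)$-dimensional, and in the compact case with $n=3$ one uses compactness instead), so $\pi_1(Q_i)$ is a lattice in $H_1\cap H_2\cong\text{SO}(n-2,1)$, Zariski-dense there by Borel density; maximality of $\text{SO}(n-2,1)$ in $\text{SO}(n-1,1)$ gives density of $\pi_1(S_i)$ in $H_i$, and maximality of $\text{SO}(n-1,1)$ in $\text{SO}(n,1)$ shows $H_1$ and $H_2$ generate $\text{SO}(n,1)$. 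To repair your proposal you would have to supply the tiling-rigidity/discreteness step from scratch; as written, it assumes essentially what the lemma asserts.
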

\begin{proof}
By 1.6 of \cite{G-PS} (see also \cite[Section 13]{Samurais}) it is enough to show that if $U_1 \cap U_2$ contains an open set then the monodromy of the fundamental group of some connected component of $U_1 \cap U_2$ is Zariski-dense in $\text{SO}(n,1)$. 

Suppose that $U_1 \cap U_2$ has a non-empty interior. There are two possibilities with respect to the relative position of $U_1$ and $U_2$ inside $W$. First, it could be that every component of $\partial U_i,~i=1,2$ is either disjoint from $\partial U_{3-i}$ or coincides with a component of $\partial U_{3-i}$.
In this case let $V$ denote any connected component of $(U_1 \cap U_2) \setminus (\partial U_1 \cup \partial U_2)$. Observe that $V$ is a sub-manifold of $U_1 \cap U_2$ with totally geodesic boundary and finite volume, and hence by \cite[Section 0.1]{G-PS} its fundamental group is Zariski-dense as required.

In the remaining cases, $\partial U_1$ intersects $U_2$. Let $S_1$ be some connected component of this intersection, and note that $\partial S_1$ is contained in $\partial U_1 \cap \partial U_2$. Similarly let $S_2$ be a connected component of $\partial U_2 \cap U_1$.

For $i=1,2$, let $H_i \cong \text{SO}(n-1,1)$ be the subgroup of $\text{SO}(n,1)$ corresponding to a lift inside $\HH{n}$ of the sub-manifold $S_i$. We claim that $\pi_1(S_i)$ is Zariski-dense in $H_i$. This will suffice since, as $\text{SO}(n-1,1)$ is a maximal algebraic subgroup in $\text{SO}(n,1)$, the groups $H_1$ and $H_2$  together generate $\text{SO}(n,1)$.

To verify the above claim, choose a component $Q_i \subset \partial U_1 \cap \partial U_2$ of $\partial S_i$. By \cite[Lemma 1.7A]{G-PS}, $\pi_1(Q_i)$ is of infinite index in $\pi_1(S_i)$. By Proposition \ref{prop:proper_submanifold_has_finite_volume} (or by compactness when $n=3$), $Q_i$ has finite volume for $i=1,2$, i.e. $\pi_1(Q_i)$ is a lattice in a corresponding copy of $H_1\cap H_2\cong \text{SO}(n-2,1)$ and hence Zariski-dense there by the Borel density theorem. The result follows since $H_1\cap H_2$ is a maximal algebraic subgroup of $H_i$. 

\end{proof}

\begin{proof}[Proof of Proposition \ref{prop:twoMGammasarenotcommensurable}]
Suppose, by way of contradiction, that $M$ is a common finite cover of both $M_{\Delta_1}$ and $M_{\Delta_2}$ with associated covering maps $\pi_i : M \to M_{\Delta_i}$. 

Let $x\in M$ be a point. By Lemma \ref{lemma:twononcommensurablearedisjoint}, 
$\pi_1(x)$ belongs to the interior of building block sub-manifold of $M_{\Delta_1}$ of type $V_i,~i=0,1$ if and and only if $\pi_2(x)$ belongs to the interior of a building block of the same type in $M_{\Delta_2}$. Clearly the same holds for the other four building blocks $A^{\pm},B^{\pm}$ as well. 

As in the proof of Proposition \ref{prop:decoratedhavenocommoncovers} we may write $\Delta_i = \Delta_{H_i}$ where the $H_i$ are finite index subgroups of the free group $F$. Let $\gamma$ be a simple closed loop in $\Delta_1$ of length $k=|\gamma|$ based at the colored vertex with labeling $l(\gamma)\in H_1 \setminus H_2$. 

Fix a point $p$ in the interior of the copy of $V_1$ in $M_{\Delta_1}$. We associate to $\gamma$ a closed path 
$$
 c_\gamma : \left[0,1\right] \to M_{\Delta_1}~\text{with}~ c_\gamma(0)=c_\gamma(1)=p
$$
such that $c_\gamma$ intersects the copies of the boundary submanifold $N$ transversely at times 
$$
 0 < t_1 < \cdots < t_{3k} < 1
$$ 
and so that each $c_{\gamma | \left(t_i, t_{i+1}\right)}, 0\le i\le 3k$ (with $t_0=0$ and $t_{3k+1}=1$) is contained in the interior of a single building block manifold. Moreover $c_\gamma$ traces $\gamma$ in the obvious sense. For instance, an edge of type $a^{+1}$ in $\gamma$ corresponds to consecutive segments $[t_i,t_{i+1}],[t_{i+1},t_{i+2}]$ on which $c_\gamma$ travels along $A^-$ and then along $A^+$ from boundary to boundary, where both external boundaries (the first and the third along these segments of $c_\gamma$) are glued to copies of $V_1$ or $V_0$ --- depending on whether or not that edge is incident to colored base-point of $\gamma$ (see Remark \ref{rem:details_of_gluing}(i)).

Choose a lift $\tilde{c}_\gamma$ of $c_\gamma$ to $M$. Then $\pi_2 \circ \tilde{c}_\gamma$ is a path in $M_{\Delta_2}$ that starts at a point $p_2$ belonging to the interior of the copy of $V_1$ in $M_{\Delta_2}$ and traces $\gamma$ in the above sense. This is a contradiction since $c_\gamma$ ends at $p\in V_1 \subset M_{\Delta_1}$ while $\pi_2 \circ \tilde{c}_\gamma$ ends in the interior of a building block submanifold of $M_{\Gamma_2}$ isometric to $V_0$.
\end{proof}



\section{Constructing the building blocks}
\label{sec:constructing_building_blocks}

In this section we construct the building blocks that are required in order to validate the discussion of the previous section and subsequently the proofs of our main results. We divide this section into two parts. The first deals with the geometric and the second with the arithmetic aspects of the construction.

\subsection{Manifolds with totally geodesic boundary}\label{sec:multiple-boundaries}

There is a standard way to construct hyperbolic manifolds with totally geodesic boundary. We summarize it below --- for details the reader is referred to \cite{millson1976first,G-PS, lubotzky1996free}.

Let $k$ be a totally real algebraic number field. Assume that $ q $ is a quadratic form over $k$ of signature $ (n,1) $ such that every non-trivial Galois conjugate of $q$ is positive definite, i.e. of signature $(n+1,0)$. 
Consider the $k$-group $\G=\mathbb{SO}\left(q\right)$ and the associated group of $\RR$-rational points
\[ 
 G = \G  \left( \RR \right)^\circ \subseteq \text{SL}(n+1,\RR) 
\] 
consisting of real matrices with unit determinant that preserve the form $q$. Let $\Ok$ be the ring of integers in $k$ and 
\[ 
 \Gamma = \G(\Ok) \subseteq G 
\]
the corresponding arithmetic subgroup. By the Borel--Harish-Chandra theorem \cite{BoHC}, $\Gamma$ is a lattice in $G$. Moreover, $q$ is $k$-anisotropic (i.e. $0$ is not represented over $k$) if and only if $\Gamma$ is co-compact in $G$. For instance, this is the case whenever $k \neq \QQ$.


The lattice $\Gamma$ can have torsion, however by the Minkowski--Selberg lemma, up to replacing $\Gamma$ by a finite index congruence subgroup we may suppose that it is torsion free. 

\begin{rem}\label{rem:torsion}
In the sequel we shall consider six (non-commensurable) arithmetic groups $\Gamma_1,\ldots,\Gamma_6$ corresponding to different quadratic forms over $k$. Evidently we may chose a single finite index ideal in $\Ok$ such that the six corresponding congruence subgroups are simultaneously torsion free.
\end{rem}

Consider the finite volume complete manifold $M = \Gamma \backslash \HH{n} $ with the covering map $p:\HH{n} \rightarrow M $ from the Lobachevsky space $\HH{n}$. A natural candidate for a totally geodesic submanifold in $M$ would be the image under $p$ of a hyperplane in $\HH{n}$. Consider the $q$-hyperboloid model for $\HH{n}$, that is the level set $\{ q=-1\}$ in $\RR^{n+1}$, let
$$
 R = \HH{n} \cap \{x\in \RR^{n+1} : x_0 = 0\}
$$ 
and suppose that $q|_R$ is of signature $(n-1,1)$.
Let $\Gamma_0 \subseteq \Gamma$ be the subgroup of $\Gamma$ consisting of the elements that preserve $R$ and set $N = \Gamma_0 \backslash R$. Then $N$ is a complete finite volume $(n-1)$-dimensional hyperbolic manifold, and there is an obvious embedding $s : N \rightarrow M $. In fact, since the subspace $R$ is defined over $\QQ$, we have:

\begin{prop}
\label{prop:sisanembedding}
The map $s : N \hookrightarrow M$ is a proper embedding and its image $s(N)$ is a totally geodesic co-dimension one arithmetic submanifold.
\end{prop}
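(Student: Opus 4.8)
The plan is to pass to the algebraic group picture and reduce the embeddedness assertion to a congruence condition on the normal vector of $R$. Write $e_0\in\Ok^{n+1}$ for a $q$-normal vector to $R$, so that $R = e_0^{\perp}\cap\HH{n}$ (legitimate because $R$ is defined over $\QQ$), and put $c = q(e_0)$; this is totally positive --- positive at the identity embedding since $q$ has signature $(n,1)$ while $q|_R$ has signature $(n-1,1)$, and positive at the remaining embeddings since there the conjugate forms are positive definite. Let $B$ denote the symmetric bilinear form with $B(x,x) = q(x)$, and $\mathbb{L} = \mathrm{Stab}_{\G}(\langle e_0\rangle)$, a $k$-subgroup of $\G$ whose identity component is $\mathbb{SO}(q|_R)$. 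As $q|_R$ is a restriction of $q$, its non-trivial Galois conjugates are still positive definite, so Borel--Harish-Chandra applies to $\mathbb{L}$ just as to $\G$: the group $\mathbb{L}(\Ok)$, hence also the commensurable group $\Gamma_0 = \Gamma\cap\mathbb{L}(\RR)$, is a lattice acting properly and with finite covolume on $R\cong\mathbb{H}^{n-1}$. Consequently $N = \Gamma_0\backslash R$ is a complete hyperbolic $(n-1)$-manifold (torsion-free, after the reduction of Remark \ref{rem:torsion}) of finite volume, arithmetic and associated to the $k$-form $q|_R$; and since $R$, being the intersection of $\HH{n}$ with a linear subspace, is totally geodesic while $p$ is a local isometry, the image $p(R) = s(N)$ is locally totally geodesic of codimension one. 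So everything follows once we know $s$ is injective and proper.

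For those two properties the plan is to prove --- after shrinking $\Gamma$ to a sufficiently deep principal congruence subgroup $\Gamma(\mathfrak{a})$, which Remark \ref{rem:torsion} again permits --- that the family $\{\gamma R : \gamma\in\Gamma\}$ is \emph{locally finite} and that any two of its members either coincide or are disjoint in the interior of $\HH{n}$. Both rest on one mechanism, where the hypotheses on $q$ enter. Diagonally embedding $\G(k)$ into $\prod_{\sigma}\G^{\sigma}(\RR)$ over the archimedean places $\sigma$ of $k$, the orbit $\Gamma e_0$ lands in a fixed $\Ok$-lattice $L\subset k^{n+1}$ (since $e_0$ is integral and $\Gamma\le\G(\Ok)$), and $L$ is discrete in $\prod_{\sigma}\RR^{n+1}$; moreover at every non-trivial place $\sigma$ the form $q^{\sigma}$ is positive definite, so $(\gamma e_0)^{\sigma}$ lies on the \emph{compact} ellipsoid $\{q^{\sigma} = c^{\sigma}\}$, bounded independently of $\gamma$. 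Local finiteness is then immediate: with $x_0\in\HH{n}$ normalised by $q(x_0) = -1$ and $D>0$ fixed, the condition $d(x_0,\gamma R)\le D$ reads $|B(x_0,\gamma e_0)|\le\sqrt{c}\,\sinh D$, which --- $x_0$ spanning the negative-definite line --- bounds $\gamma e_0$ at the identity place too; so $\gamma e_0$ ranges over a finite subset of $L$, and only finitely many hyperplanes $\gamma R$ meet any given ball.

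The heart of the matter is the dichotomy. Geometrically, $\gamma_1 R$ and $\gamma_2 R$ meet at an interior point of $\HH{n}$ exactly when $|B(\gamma_1 e_0,\gamma_2 e_0)| < c$. Now $B(\gamma_1 e_0,\gamma_2 e_0) = B(\gamma_2^{-1}\gamma_1 e_0, e_0)$, and if $\gamma_2^{-1}\gamma_1\in\Gamma(\mathfrak{a})$ then $\gamma_2^{-1}\gamma_1 e_0\equiv e_0\pmod{\mathfrak{a}}$, so --- choosing the integral structure so that $B$ is $\Ok$-valued on $L$, as we may --- the element $\delta := B(\gamma_1 e_0,\gamma_2 e_0) - c$ lies in $\mathfrak{a}$. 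If $\delta\neq 0$ its norm has absolute value $\ge N(\mathfrak{a})$ while its conjugates at the non-trivial places are bounded by a constant (the compact ellipsoids again), so $|\delta|\ge c'N(\mathfrak{a})$ at the identity place for some $c'>0$; taking $\mathfrak{a}$ with $c'N(\mathfrak{a}) > 2c$ then forces $\delta = 0$ whenever $|B(\gamma_1 e_0,\gamma_2 e_0)| < c$. But $\delta = 0$ gives $q(\gamma_1 e_0 - \gamma_2 e_0) = 2c - 2c = 0$, so either $\gamma_1 e_0 = \gamma_2 e_0$ and the hyperplanes coincide, or $\gamma_1 e_0 - \gamma_2 e_0$ is a non-zero isotropic vector, $\langle\gamma_1 e_0,\gamma_2 e_0\rangle$ is degenerate, and $\gamma_1 R$, $\gamma_2 R$ share at most a boundary point --- neither of which meets inside $\HH{n}$, a contradiction. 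Injectivity of $s$ follows at once: if $x,y\in R$ and $\gamma x = y$ with $\gamma\in\Gamma$, then $x$ lies in the interior of both $R$ and $\gamma^{-1}R$, hence $\gamma^{-1}R = R$, i.e. $\gamma\in\Gamma_0$ and $\Gamma_0 x = \Gamma_0 y$. Properness follows from local finiteness: $\bigcup_{\gamma}\gamma R$ is a locally finite union of closed geodesic hyperplanes, hence a closed $\Gamma$-invariant set, so $s(N) = p(R)$ is closed in $M$; together with injectivity and with $s$ being a local homeomorphism onto its image (again by local finiteness and the dichotomy), this makes $s$ a proper embedding.

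I expect the dichotomy to be the main obstacle: one must translate ``$\gamma_1 R$ and $\gamma_2 R$ meet inside $\HH{n}$'' into the inequality $|B(\gamma_1 e_0,\gamma_2 e_0)| < c$ --- via the description of dihedral angles and common perpendiculars in the hyperboloid model --- and then run the norm/congruence estimate. This is precisely where both standing hypotheses on $q$ are indispensable: the rationality of $R$, which puts $\Gamma e_0$ inside an $\Ok$-lattice and makes the relevant quantities algebraic integers, and the positive-definiteness of every non-trivial Galois conjugate of $q$, which makes that lattice discrete and keeps the orbit bounded at the auxiliary places --- the input that converts a congruence into a genuine inequality. Everything else is general hyperbolic geometry or a direct repetition of Borel--Harish-Chandra.
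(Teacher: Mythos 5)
The paper itself gives no proof of this proposition --- it is stated as a standard fact, with the reader referred to Millson, Gromov--Piatetski-Shapiro and Lubotzky --- so the comparison is with the classical argument in those sources. Your properness/local-finiteness step is exactly the standard one: integrality of the orbit $\Gamma e_0$ together with positive-definiteness of the nontrivial Galois conjugates of $q$ makes that orbit discrete and bounded at the auxiliary places, and the hyperboloid-model distance formula converts ``$\gamma R$ meets a fixed ball'' into a bound at the identity place; this is the argument of \cite[1.6]{G-PS} and is correct as you present it. Where you genuinely diverge is embeddedness: you pass to a deeper principal congruence subgroup $\Gamma(\mathfrak{a})$ and force the dichotomy ``coincide or disjoint'' for translates of $R$ by combining the intersection criterion $|B(\gamma_1 e_0,\gamma_2 e_0)|<c$ with a norm-versus-congruence estimate ($\delta\in\mathfrak{a}\setminus\{0\}$ has large norm, its conjugates are bounded by the compact ellipsoids, hence $\delta=0$), and the degenerate case $B=c$ is correctly excluded. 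The classical route, available in the paper's concrete situation (the diagonal forms $q_a$, $r_a$ and the coordinate hyperplane $\{x_1=0\}$), is different: the reflection $\mathrm{diag}(-1,1,\dots,1)$ lies in $\mathrm{O}(q,\Ok)$ and normalizes every principal congruence subgroup, so $p(R)$ is contained in the fixed-point set of an isometric involution of $M$, and two crossing translates of $R$ would force that involution to be trivial --- giving embeddedness with no further shrinking of $\Gamma$.

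The trade-off is worth noting explicitly: your argument is more general (it needs neither a diagonal form nor an integral reflection), but it proves a slightly weaker statement than the one printed, namely the embedding only after replacing $\Gamma$ by $\Gamma(\mathfrak{a})$, whereas the proposition refers to the $\Gamma$ already fixed after Remark \ref{rem:torsion}. For the paper's purposes this is harmless --- Remark \ref{rem:torsion}, Proposition \ref{prop:can_find_disjoint_nonseperating_lifts} and the final conclusions are insensitive to a further common congruence shrinking, and you say as much --- but you should either state that you are proving this variant or add the reflection/fixed-point argument to recover the literal statement. Two smaller points: ``injective with closed image'' alone does not yield an embedding (think of an injectively immersed curve limiting onto itself), so the local finiteness plus the dichotomy must carry that step, as you indicate but only in passing; and the finite-volume/arithmeticity of $N$ via Borel--Harish-Chandra applied to $\mathbb{SO}(q|_R)$, with $\Gamma_0$ commensurable to $\mathbb{SO}(q|_R)(\Ok)$, is fine and matches the standard treatment.
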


If $N$ happens to be non-separating, then the completion of $M \setminus N$ gives a hyperbolic manifold whose boundary has two connected components isometric to $N$. Similarly, if $N$ is separating then the completion of each component of $M \setminus N$ has boundary isometric to $N$. The following proposition allows us to control the number of boundary components:

\begin{prop}
For every $m\in \ZZ$ there exists a finite normal cover $M'$ of $M$ that contains (at least) $m$ disjoint isometric copies $N_1, \ldots, N_m$ of $N$ such that $M' \setminus \bigcup_{i=1}^{m} N_i$ is connected.
\label{prop:can_find_disjoint_nonseperating_lifts}
\end{prop}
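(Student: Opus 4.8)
The plan is to produce the cover $M'$ via a surjection from $\pi_1(M)$ onto a finite group that is engineered to make $m$ chosen lifts of $N$ simultaneously present and their union non-separating. First I would pass to the (finite-sheeted) setting where $N$ is non-separating in $M$ itself: if $N$ already separates $M$, replace $M$ by a double cover dual to $N$ in which (a single lift of) $N$ becomes non-separating, using that $[N] \in H_{n-1}(M;\ZZ/2)$ is Poincar\'e-dual to a class in $H^1(M;\ZZ/2)$ and that after the double cover that class pulls back to zero so $M\setminus N$ is connected. (Alternatively one may use $H_1$ directly: non-separating is equivalent to the dual $\ZZ/2$-cohomology class being nonzero on some loop.) So from now on assume $N \subset M$ is non-separating, and fix a loop $c$ in $M$ meeting $N$ transversely in exactly one point; let $\phi : \pi_1(M) \to \ZZ/2$ be the homomorphism recording the mod-$2$ intersection number with $N$, so $\phi(c) = 1$.

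Next I would take the cover $M_0 \to M$ corresponding to $\ker\phi$, which is a $\ZZ/2$-cover; in $M_0$ the preimage of $N$ is two disjoint copies $N^{(0)}, N^{(1)}$ of $N$, interchanged by the deck transformation $\sigma$, and $M_0 \setminus (N^{(0)}\cup N^{(1)})$ is connected (it is the preimage of the connected $M\setminus N$). Now iterate: abelianize to get more lifts. Concretely, consider the cover $M_k \to M$ corresponding to the kernel of the composite $\pi_1(M) \xrightarrow{\phi_\ZZ} \ZZ \twoheadrightarrow \ZZ/k$, where $\phi_\ZZ$ is the integral intersection-number homomorphism with (an oriented push-off of) $N$ — here I need $M$ to be orientable and $N$ two-sided, which can be arranged by passing first to a finite cover, and which is automatic in the arithmetic co-dimension-one situation of Proposition \ref{prop:sisanembedding} after a controlled congruence cover. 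In $M_k$ the preimage of $N$ is $k$ disjoint copies $N_1,\dots,N_k$, cyclically permuted by the deck group $\ZZ/k$, and $M_k \setminus \bigcup N_i$ is the $k$-fold cyclic cover of the connected manifold $M \setminus N$ corresponding to the surjection $\pi_1(M\setminus N) \to \ZZ/k$ induced by winding around $N$; this cyclic cover is connected because that surjection is onto. Taking $k = m$ (after the preliminary finite cover making things orientable, which only multiplies the number of copies and can be absorbed) and then passing to the normal core gives a \emph{normal} finite cover $M'$ of $M$ with the required properties.

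The two points needing care are: (1) ensuring $N$ is two-sided and $M$ orientable so that the \emph{integral} intersection homomorphism $\phi_\ZZ$ exists — without this we only get the order-$2$ trick and hence only $2$ copies; this is handled by first replacing $M$ by its orientation double cover (and, if needed, a further double cover trivializing the normal bundle of $N$), noting that these preliminary covers still contain a non-separating copy of $N$ or can be arranged to, and that the final count $m$ is unaffected since we are free to choose $k$; and (2) checking connectivity of $M_k \setminus \bigcup N_i$, which reduces to the statement that the map $\pi_1(M\setminus N) \to \ZZ/k$ factoring the meridian to a generator is surjective — and it is, because the meridian alone already maps onto $\ZZ/k$. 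I expect point (1), the orientability/two-sidedness bookkeeping, to be the main (though entirely routine) obstacle; everything else is the standard correspondence between cyclic covers, $\ZZ/k$-valued homomorphisms, and intersection numbers with a codimension-one submanifold.
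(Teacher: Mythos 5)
There are two genuine gaps here, and each on its own sinks the argument. First, your treatment of the separating case is backwards: if $N$ separates $M$ then $[N]=0$ in $H_{n-1}(M;\ZZ/2)$, so its Poincar\'e dual class in $H^1(M;\ZZ/2)$ is trivial and there is no connected ``double cover dual to $N$''; and in any case a class pulling back to zero would only say a lift is null-homologous, which is the wrong implication (it is the non-vanishing of the dual class that certifies non-separating, not its vanishing). Turning a separating totally geodesic hypersurface into a non-separating one in some finite cover is precisely the nontrivial content of the proposition: the paper writes $\Gamma=\Gamma_1*_{\Gamma_0}\Gamma_2$ and invokes the Weisfeiler--Nori strong approximation theorem to produce a congruence quotient in which the vertex groups $\Gamma_1,\Gamma_2$ have image of index at most $2$ while the edge group $\Gamma_0$ has arbitrarily large index, so that the induced finite amalgam acts on a bi-regular tree and admits finite-index free subgroups with quotient graphs containing many independent cycles. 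No purely homological trick of the kind you propose can substitute for this arithmetic input.

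Second, even granting that $N$ is non-separating and two-sided, the cyclic-cover step fails. A codimension-one submanifold has no meridian: every loop in $M\setminus N$ has zero intersection number with $N$, so the restriction of $\phi_{\ZZ}\bmod k$ to $\pi_1(M\setminus N)$ is trivial, not surjective. Consequently, in $M_k$ the complement of the union of the $k$ lifts of $N$ is the \emph{trivial} $k$-fold cover of $M\setminus N$, i.e.\ $k$ disjoint copies of it --- disconnected. Structurally, $M_k$ is a graph of spaces over a $k$-cycle, whose first Betti number is $1$, so at most one lift of $N$ can be removed while keeping the space connected; cyclic (indeed any abelian) covers can never yield $m\ge 2$ jointly non-separating copies. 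This is exactly why the paper instead finds a normal finite-index free subgroup of rank at least $m$ inside the finite amalgam (or HNN extension) quotient, whose quotient graph has $m$ jointly non-separating edges, and pulls it back to a normal subgroup $\Gamma''\lhd\Gamma$ to obtain $M'$.
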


The proof below is inspired by  \cite[Lemmas 2.2 and 2.4]{lubotzky1996free}.

\begin{proof}
Suppose first that $N$ is separating in $M$. It follows that $\Gamma$ is isomorphic to the amalgamated product $\Gamma_1 *_{\Gamma_0} \Gamma_2$, where $\Gamma_1$ and $\Gamma_2$ are the fundamental groups of the two connected components of $M \setminus N$. By \cite[Section 0.1]{G-PS} the subgroups $\Gamma_i,~i=1,2$ are Zariski dense in $G$. 
Since $\text{SO}(n+1,\mathbb{C})$ is an order $2$ quotient of its universal cover, it follows from the Weisfeiler--Nori strong approximation theorem (see \cite{Nik} and the references therein) that each $\Gamma_i,~i=1,2$ is mapped to a subgroup of index at most $2$ in almost every congruence quotient of $\Gamma$. 

Since $\Gamma_0$ is the intersection of $\Gamma$ with a parabolic subgroup
\[ 
 \Gamma_0 = \{ \gamma\in\Gamma : \gamma = \left( 
 \begin{matrix} 
 1 & 0 & \cdots & 0 \\ 
 * & * & * & * \\ 
 \vdots & * & \ddots & * \\ 
 * & * & * & *
 \end{matrix} \right) \}, 
\]
it is clear that we may find congruence quotients of $\Gamma$ in which the image of $\Gamma_0$ is of arbitrarily large index. Let $\Gamma ( p )$ be a principle congruence subgroup in $\Gamma$ such that 
$$
 \left [ \Gamma_i\Gamma ( p ):\Gamma_0\Gamma ( p ) \right] =k_i\ge 3,~\text{for}~i=1,2
$$ 
and denote by $\bar{\Gamma}_i,~i=0,1,2$ the images of $\Gamma_i$ in the finite group $\Gamma/\Gamma (p )$, respectively. Set 
$\Lambda = \bar{\Gamma}_1 * _{\bar{\Gamma}_0}\bar{\Gamma}_2$
 and consider the map
\[ 
 \pi : \Gamma = \Gamma_1 *_{\Gamma_0} \Gamma_2 \to \bar{\Gamma}_1 * _{\bar{\Gamma}_0}\bar{\Gamma}_2 = \Lambda. 
\]

According to the Bass--Serre theory, the group $\Lambda$ acts on the $(k_1,k_2)$-bi-regular tree $\T$. It is well known (see p. 120 of \cite{Se}) that $\Lambda$ has a finite index free subgroup $\Lambda'$ acting freely on $T$ with $\Lambda' \setminus T$ being a $(k_1,k_2)$ bi-regular finite graph. By taking a further finite index subgroup $\Lambda''$ we may assume that $\Lambda''$ is normal in $\Lambda$ and of rank at least $m$. It follows that the graph $\Lambda'' \setminus \T$ has at least $m$ simultaneously non-separating edges.

The group $\Gamma$ acts on $\T$ as well via the map $\pi$. Let $\Gamma'' = \pi^{-1}(\Lambda'')\lhd \Gamma$. As $\Gamma''$ acts on $\T$ with the same fundamental domain as $\Lambda''$, it splits as a graph of groups over the graph $\Lambda'' \setminus \T$. Moreover this graph of groups covers the graph of groups of $\Lambda = \bar{\Gamma}_1 *_{\bar{\Gamma}_0} \bar{\Gamma}_2$ (see \cite[Section 4]{bass1993covering}).

To complete the proof in this case, let $M'$ be the normal cover of $M$ corresponding to $\Gamma''$. The connected components of the preimage of $N$ inside $M'$ serve as edges in a decomposition of $M'$ according to the graph structure of $\Gamma'' \setminus \T$. Since $M'$ is normal, it is clear from the construction that all these connected components are isometric to $N$. Moreover as $N$ embeds in $M$, every two of them are disjoint. The result follows by taking copies of $N$ which correspond to a jointly non-separating set of $m$ edges of $\Gamma'' \setminus X$.
 
The remaining case where $M$ is non-separating is dealt by a similar argument. In that case $\Gamma$ is isomorphic to the HNN extension $\Gamma_1 *_{\Gamma_0}$, where $\Gamma_1$ is the fundamental group of $M \setminus N$. The map $\pi$ is defined in an analogous fashion, and essentially the same proof goes through.
\end{proof}

This takes care of the geometric side of the construction --- by removing $m$ disjoint copies of $N$ from $M'$ and taking the completion, one obtains a manifold with boundary consisting of $2m$ connected components, each of which is totally geodesic and isometric to $N$. For our purpose, only the cases $2m=2,4$ are needed. 


\subsection{Non-commensurable arithmetic pieces}
Let us start by recalling  some standard definitions and basic results about quadratic forms (for more details, see \cite{serre1993course}).

\begin{definition}
\label{def:Legendere_symbol}
For $u\in \ZZ$, the Legendre symbol $\residue{u}{p}$ is $0$ if $u$ is divisible by $p$, $1$ if the equation $u=x^2$ has a nonzero solution $\md p$, and $-1$ otherwise. 

An element $u$ with $\residue{u}{p} = 1$ is called a quadratic residue $\md p$.
\end{definition}

\begin{definition}
\label{def:Hilbert_symbol}
Let $k$ be a field, and $a,b\in k^{*}$. The Hilbert symbol $ \hilbert{a}{b}_{k} $
is defined to be $1$ if the equation $ax^2 + by^2 = z^2$ has a non-trivial solution in $k$, and $-1$ otherwise.
\end{definition} 

Over $\RR$ it is easy to see that $\hilbert{a}{b} = 1$ unless both $a$ and $b$ are negative. For $p$-adic fields the Hilbert symbol satisfies the following explicit formula:

\begin{theorem}
\label{thm:Hilbert_symbol_for_padics}
Let $k = \Qp$ be a $p$-adic field, and $a,b\in k^{*}$. Write $a=u\cdot p^n$ and $b=v\cdot p^m$ where $u,v\in \Zp$ are $p$-adic units. Then
\[ 
 \hilbert{a}{b}_k = \residue{-1}{p}^{nm} \residue{u}{p}^m \residue{v}{p}^n.
\]
\end{theorem}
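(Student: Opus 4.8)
The plan is to reduce the asserted identity to a short list of ``generating'' cases using bimultiplicativity of the Hilbert symbol, to evaluate those cases by hand, and to reassemble. Throughout I would assume $p$ is odd; the displayed formula requires this (for $p=2$ there is an analogous but genuinely different explicit formula, which is not needed here). I would use freely that a $p$-adic unit $w$ is a square in $\Zp$ if and only if $\residue{w}{p}=1$ (Hensel's lemma), so that $\Qp^{*}/(\Qp^{*})^{2}$ has order $4$ with representatives $1,\epsilon,p,\epsilon p$ for a non-square unit $\epsilon$; consequently $\hilbert{a}{b}_{k}$ depends only on $a$ and $b$ modulo squares.

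First I would recall, or prove, that $\hilbert{a}{b}_{k}$ is bimultiplicative. If $b$ is a square then $\hilbert{a}{b}_{k}=1$ for every $a$. If $b$ is not a square, then $ax^{2}+by^{2}=z^{2}$ forces $x\neq 0$ (otherwise $b=(z/y)^{2}$ would be a square), so dividing by $x^{2}$ identifies the condition $\hilbert{a}{b}_{k}=1$ with ``$a$ is a norm from $\Qp(\sqrt{b})$''; since the group of such norms has index $2$ in $\Qp^{*}$ --- which for odd $p$ may be checked directly from the order-$4$ group $\Qp^{*}/(\Qp^{*})^{2}$ --- the map $a\mapsto\hilbert{a}{b}_{k}$ is a homomorphism, and the symmetry $\hilbert{a}{b}_{k}=\hilbert{b}{a}_{k}$ then yields multiplicativity in the other variable as well. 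Granting this, it remains to compute $\hilbert{u}{v}_{k}$, $\hilbert{u}{p}_{k}$ and $\hilbert{p}{p}_{k}$ for $p$-adic units $u,v$.

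These three I would handle as follows. (i) $\hilbert{u}{v}_{k}=1$: the reduction mod $p$ of $ux^{2}+vy^{2}-z^{2}$ is a nondegenerate ternary quadratic form over $\Fp$, hence isotropic; any nontrivial zero is a smooth point (which uses that $p$ is odd and $u,v\in\Zp^{*}$), so it lifts to a $\Qp$-point by Hensel's lemma. (ii) $\hilbert{u}{p}_{k}=\residue{u}{p}$: if $\residue{u}{p}=1$ then $u$ is a square in $\Zp$, so $\hilbert{u}{p}_{k}=1$; conversely, from a primitive integral solution of $ux^{2}+py^{2}=z^{2}$ one checks that $x\notin p\Zp$ --- otherwise $z\in p\Zp$, and then $py^{2}=z^{2}-ux^{2}\in p^{2}\Zp$ would force $y\in p\Zp$, contradicting primitivity --- so $u\equiv(z/x)^{2}\pmod p$ and $\residue{u}{p}=1$. (iii) $\hilbert{p}{p}_{k}=\residue{-1}{p}$: from the universal relation $\hilbert{a}{-a}_{k}=1$ (witnessed by $x=y=1$, $z=0$) we get $\hilbert{p}{p}_{k}=\hilbert{p}{p}_{k}\hilbert{p}{-p}_{k}=\hilbert{p}{-p^{2}}_{k}=\hilbert{p}{-1}_{k}$, and $\hilbert{p}{-1}_{k}=\hilbert{-1}{p}_{k}=\residue{-1}{p}$ by case (ii) applied to the unit $-1$.

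Finally, writing $a=up^{n}$ and $b=vp^{m}$ and expanding by bimultiplicativity,
$$
 \hilbert{a}{b}_{k}=\hilbert{u}{v}_{k}\,\hilbert{u}{p}_{k}^{m}\,\hilbert{v}{p}_{k}^{n}\,\hilbert{p}{p}_{k}^{nm}=\residue{u}{p}^{m}\,\residue{v}{p}^{n}\,\residue{-1}{p}^{nm},
$$
which is the desired formula; this is consistent with altering $n$ or $m$ by $2$, as it should be since that multiplies $a$ or $b$ by a square. I expect the main subtlety to be the bimultiplicativity step --- equivalently, the fact that the norm group of a quadratic extension of $\Qp$ has index $2$ --- so that is the part I would treat most carefully, either via the short direct argument sketched above (valid for odd $p$) or by citing the standard theory of quadratic forms over local fields; everything else is Hensel's lemma together with bookkeeping.
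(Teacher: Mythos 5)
Your proposal is correct, but note that the paper itself contains no proof of this statement: it is quoted as a standard fact about quadratic forms over $\Qp$ (the reader is referred to Serre's \emph{A Course in Arithmetic}), so there is no internal argument to compare against. What you write is essentially the classical proof from that reference: reduce by bimultiplicativity to the three generating cases $\hilbert{u}{v}$, $\hilbert{u}{p}$, $\hilbert{p}{p}$, compute the first two by Hensel's lemma (smoothness of the reduction mod $p$ uses $p$ odd and $u,v\in\Zp^*$) and the third from $\hilbert{a}{-a}=1$ together with case (ii); the final bookkeeping is right, as is your sanity check that the formula only depends on $n,m$ modulo $2$. Two remarks. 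First, you correctly add the hypothesis that $p$ is odd, which the statement in the paper leaves implicit; this is harmless for the paper, since the formula is only ever applied there to odd primes (and the relevant computation explicitly assumes $p>2$), but your proof should keep that hypothesis visible. Second, the one step you leave at the level of an assertion --- that for $b$ a non-square the norm group of $\Qp(\sqrt{b})^*$ has index exactly $2$ in $\Qp^*$, which is what makes $a\mapsto\hilbert{a}{b}_k$ a character --- is indeed the crux, and you rightly flag it; for odd $p$ it is checkable by hand as you indicate (for the unramified extension, surjectivity of the norm $\mathbf{F}_{p^2}^*\to\mathbf{F}_p^*$ plus Hensel shows every unit is a norm while $p$ is not; for the ramified extensions $\Qp(\sqrt{p})$, $\Qp(\sqrt{\epsilon p})$, unit norms are squares mod $p$ while $-p$, resp. $-\epsilon p$, is a non-square norm). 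Spelling that paragraph out would make the argument fully self-contained; as it stands the proof is a sound sketch of the standard route.
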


It follows that the Hilbert symbol is bilinear (in the sense of (1) below), and its properties (2),(3) follow immediately.

\begin{prop}
\label{prop:properties_of_hilbert_symbol}
Let $k=\Qp$, and $a,b,c \in k^*$. The Hilbert symbol satisfies
\begin{enumerate}
\item $\hilbert{ac}{b}_k = \hilbert{a}{b}_k \hilbert{c}{b}_k$
\item $\hilbert{a^2}{b}_k = 1$
\item $\hilbert{a}{b}_k = \hilbert{a}{-ab}_k$
\end{enumerate}
\end{prop}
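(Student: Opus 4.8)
The statement to prove is Proposition \ref{prop:properties_of_hilbert_symbol}, asserting bilinearity and two immediate consequences of the Hilbert symbol over $\Qp$, given the explicit formula in Theorem \ref{thm:Hilbert_symbol_for_padics}.

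\medskip

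\noindent\textbf{Plan of proof.}
The plan is to derive all three properties directly from the explicit formula $\hilbert{a}{b}_k = \residue{-1}{p}^{nm} \residue{u}{p}^m \residue{v}{p}^n$, using only the multiplicativity of the Legendre symbol, namely $\residue{xy}{p} = \residue{x}{p}\residue{y}{p}$ for $p$-adic units $x,y$ (equivalently, the fact that squaring is a homomorphism on $(\ZZ/p)^*$, which for $p=2$ requires the standard convention for the symbol on $2$-adic units, but in either case the symbol $u \mapsto \residue{u}{p}$ is a homomorphism from units to $\{\pm 1\}$).

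For part (1), I would write $a = u p^n$, $c = w p^\ell$, and $b = v p^m$ with $u, v, w$ units, so that $ac = (uw) p^{n+\ell}$. Plugging into the formula gives
\[
\hilbert{ac}{b}_k = \residue{-1}{p}^{(n+\ell)m} \residue{uw}{p}^m \residue{v}{p}^{n+\ell}
= \residue{-1}{p}^{nm}\residue{-1}{p}^{\ell m}\residue{u}{p}^m\residue{w}{p}^m \residue{v}{p}^n \residue{v}{p}^\ell,
\]
and regrouping the six factors into $\bigl(\residue{-1}{p}^{nm}\residue{u}{p}^m\residue{v}{p}^n\bigr)\bigl(\residue{-1}{p}^{\ell m}\residue{w}{p}^m\residue{v}{p}^\ell\bigr) = \hilbert{a}{b}_k\hilbert{c}{b}_k$. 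Here I use the multiplicativity of the Legendre symbol on units and ordinary exponent arithmetic in the integer exponents. Part (2) is then immediate: applying (1) with $c = a$ gives $\hilbert{a^2}{b}_k = \hilbert{a}{b}_k^2 = 1$ since the symbol takes values in $\{\pm 1\}$. For part (3), by (1) we have $\hilbert{a}{-ab}_k = \hilbert{a}{-1}_k \hilbert{a}{a}_k \hilbert{a}{b}_k$ (using symmetry of the Hilbert symbol, itself visible from the formula by swapping the roles of $a$ and $b$), so it suffices to check $\hilbert{a}{-1}_k\hilbert{a}{a}_k = 1$, i.e.\ $\hilbert{a}{-a}_k = 1$; this is clear from the definition since $ax^2 + (-a)y^2 = z^2$ has the nontrivial solution $(x,y,z) = (1,1,0)$. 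Alternatively one plugs $-a = (-u)p^n$ into the formula and simplifies, but the direct argument via the defining equation is cleanest.

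\medskip

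\noindent\textbf{Main obstacle.}
There is no real obstacle; the content is entirely bookkeeping, and the only point requiring a word of care is that the multiplicativity we invoke for the integer exponents ($(-1)^{(n+\ell)m} = (-1)^{nm}(-1)^{\ell m}$) and the multiplicativity of the Legendre symbol on units both hold unconditionally, so that the regrouping in part (1) is legitimate. (One should also note that for part (1) stated as $\hilbert{ac}{b}_k = \hilbert{a}{b}_k\hilbert{c}{b}_k$ the corresponding statement in the second variable follows by symmetry of the Hilbert symbol.) The mildly delicate case is $p = 2$, where the "Legendre symbol" of Definition \ref{def:Legendere_symbol} must be interpreted appropriately and the formula of Theorem \ref{thm:Hilbert_symbol_for_padics} is read with that convention; since Theorem \ref{thm:Hilbert_symbol_for_padics} is assumed, we simply inherit whatever convention makes it true, and the homomorphism property of $u \mapsto \residue{u}{p}$ on units — which is all we use — persists. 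Thus the proof is a short formal computation.
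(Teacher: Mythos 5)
Your proposal is correct and follows essentially the same route as the paper, which simply derives bilinearity from the explicit formula of Theorem \ref{thm:Hilbert_symbol_for_padics} and notes that (2) and (3) follow immediately; your only embellishments are spelling out the exponent bookkeeping and verifying $\hilbert{a}{-a}_k=1$ via the solution $(1,1,0)$, both of which are consistent with the paper's (terse) argument.
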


The importance of the Hilbert symbol to the study of quadratic forms is demonstrated in the following result. Given a quadratic form $q$, define its Hasse--Witt invariant  
\[ 
 \varepsilon_k (q)= \prod_{i<j} \hilbert{a_i}{a_j}_k \in \{\pm 1\}
\] 
where $a_i \in k$ and $q = a_1 x_1^{2} + \cdots a_{n+1} x_{n+1}^2$ in some orthogonal basis.

\begin{theorem}
\label{thm:equivalence_of_quadratic_forms}
Two quadratic forms over $k=\Qp$ are equivalent over $k$ if and only if they have the same rank, the same discriminant and the same $\epsilon_k$.
\end{theorem}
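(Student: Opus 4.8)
\emph{The necessity} is essentially formal. If $q\cong q'$ over $k$ then they obviously have the same number of variables $m$ and the same discriminant, the latter being a well-defined class in $k^{*}/(k^{*})^{2}$. The only point needing care is that the formula defining $\varepsilon_k(q)$ refers to a choice of orthogonal basis, so one must verify that it is an isometry invariant. The natural route is Witt's chain-equivalence theorem: any two orthogonal bases of $q$ are joined by a finite chain of isometries, each modifying only a two-variable block $\langle a_i,a_j\rangle$ and replacing it by an isometric binary form $\langle b_i,b_j\rangle$ with $b_ib_j\equiv a_ia_j$ modulo squares. Invariance of $\varepsilon_k$ under one such move is then a two-line computation from the bilinearity of the Hilbert symbol together with the relations of Proposition~\ref{prop:properties_of_hilbert_symbol} and the identity $\hilbert{a}{a}_k=\hilbert{a}{-1}_k$ (itself a consequence of (3) and (2)). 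The same bookkeeping produces the orthogonal-sum formula
\[
 \varepsilon_k\bigl(\langle a\rangle\oplus q_1\bigr)=\hilbert{a}{d(q_1)}_k\cdot\varepsilon_k(q_1),
\]
as well as $d(\langle a\rangle\oplus q_1)\equiv a\cdot d(q_1)$; both are used below.

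\emph{The sufficiency} I would prove by induction on the number of variables $m$. If $m=1$, then $q=ax^{2}$ and $q'=a'x^{2}$ with $a\equiv a'$ modulo squares, so rescaling the variable by a square root of $a/a'$ gives $q\cong q'$. Assume $m\ge 2$ and that the statement holds with fewer variables. The crux is the following claim: if $q$ and $q'$ have the same rank $m$, discriminant $d$ and invariant $\varepsilon$, then they represent a common value $a\in k^{*}$. Granting this, write $q\cong\langle a\rangle\oplus q_1$ and $q'\cong\langle a\rangle\oplus q_1'$ with $q_1,q_1'$ of rank $m-1$. Comparing discriminants gives $d(q_1)\equiv ad\equiv d(q_1')$, and the orthogonal-sum formula then forces $\varepsilon_k(q_1)=\varepsilon_k(q_1')$; so by induction $q_1\cong q_1'$, and Witt's cancellation theorem yields $q\cong q'$.

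\emph{The common-value claim} would be proved by a second induction on $m$, peeling variables off one at a time to reduce to small rank, the base being $m=2$. For $m=2$: a binary form of discriminant $d$ represents $a$ exactly when $q\cong\langle a,ad\rangle$, and $\varepsilon_k(\langle a,ad\rangle)=\hilbert{a}{ad}_k=\hilbert{a}{-d}_k$, so $q$ represents $a$ iff $\hilbert{a}{-d}_k=\varepsilon_k(q)$. This set of classes $a$ is always non-empty (if $-d$ is not a square, $a\mapsto\hilbert{a}{-d}_k$ is onto $\{\pm1\}$ by the explicit formula of Theorem~\ref{thm:Hilbert_symbol_for_padics}; if $-d$ is a square then $q$ is the hyperbolic plane, $\varepsilon_k(q)=1$, and $q$ is universal) and depends only on $(d,\varepsilon)$, so two binary forms with equal data share a value. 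Adjoining variables, one then shows that over $\Qp$ a form of rank $m\ge 3$ represents every class of $k^{*}/(k^{*})^{2}$ with at most one exception --- and with no exception once $m\ge 4$, since a $p$-adic form of rank $\ge 5$ is isotropic hence universal, while the unique anisotropic rank-$4$ form (the norm form of the quaternion division algebra) is also universal. As $|k^{*}/(k^{*})^{2}|\ge 4$, two forms of rank $\ge 3$ automatically share a represented value, completing the induction. The one genuinely computational point is this last step --- the case analysis for $m=2,3$ (and $m=4$ over $\QQ_2$), which rests on Theorem~\ref{thm:Hilbert_symbol_for_padics} and on a precise description of the finite group $\Qp^{*}/(\Qp^{*})^{2}$; everything else is formal manipulation of the invariants plus Witt cancellation.
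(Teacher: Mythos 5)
The paper does not prove this statement at all: it is quoted as a classical fact about $p$-adic quadratic forms with a reference to Serre's \emph{A Course in Arithmetic} \cite{serre1993course}, and your outline is essentially Serre's proof of that theorem (invariance of $\varepsilon_k$, the orthogonal-sum formula, induction via a commonly represented value plus Witt cancellation, with the representation criteria for forms of rank $2$, $3$ and the universality of rank $\ge 4$ forms supplying the computational core). Your sketch is correct, and you rightly identify that the only genuinely computational input is the small-rank representation analysis over $\Qp$ (with $\QQ_2$ requiring its own version of the explicit Hilbert-symbol formula, since Theorem \ref{thm:Hilbert_symbol_for_padics} as stated applies to odd $p$).
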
 

The following lemma shows that in certain situations $\varepsilon$ is invariant under multiplication by a constant:
\begin{lemma}
\label{lem:epsilon_is_multiplicative}
Let $q$ be a quadratic form in $n+1$ variables defined over a $p$-adic field $k$ and let $\lambda \in k^*$. If $\hilbert{\lambda}{\lambda}_{k}=1$ and $n+1$ is odd then $\varepsilon_{k}(\lambda q)=\varepsilon_{k}(q)$.
\begin{proof}
We may assume that $q$ is given in some orthogonal basis by $q=\sum a_i x_{i}^{2}$. Using bilinearity of the Hilbert symbol (see Proposition \ref{prop:properties_of_hilbert_symbol}) we obtain:
\begin{align*}
\varepsilon_k(\lambda q) & = \prod_{i<j} \hilbert{\lambda a_i}{\lambda a_j} = \prod_{i<j} \left( \hilbert{ a_i}{a_j} \hilbert{a_i}{\lambda} \hilbert{a_j}{\lambda} \hilbert{\lambda}{\lambda} \right) = \\
& = \prod_{i<j} \hilbert{a_i}{a_j}\cdot \prod_i \hilbert{a_i}{\lambda}^n
= \varepsilon_k(q)
\end{align*}
as required.
\end{proof}
\end{lemma}



Let us now concentrate on arithmetic hyperbolic manifolds. 
Recall the following commensurability criterion (see \cite[Section 2.6]{G-PS}):

\begin{prop}
\label{prop:condition_for_commensurability}
Let $q_1$ and $q_2$ be two quadratic forms of signature $(n,1)$ defined over a totally real number field $k$. Assume that every non-trivial Galois conjugate of $q_1$ as well as of $q_2$ is positive definite.

Then the two hyperbolic orbifolds with monodromy groups $\text{SO} \left( q_i, \Ok \right) $ for $i=1,2$ are commensurable if and only if $q_1$ is isometric over $k$ to $\lambda q_2$ for some $\lambda \in k^{*}$ (i.e. $Aq_1A^{t} = \lambda q_2$ for some $A\in GL(n+1,k)$).
\end{prop}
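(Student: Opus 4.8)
The plan is to reduce the commensurability statement to an arithmetic statement about the groups $\mathbb{SO}(q_i)$ being isomorphic as algebraic $k$-groups, and then to identify the condition for this isomorphism in terms of the forms $q_1,q_2$. First I would recall that two arithmetic lattices arising from semisimple $k$-groups $\mathbb{G}_1,\mathbb{G}_2$ (via the Borel--Harish-Chandra construction, restricting scalars from $k$ to $\mathbb{Q}$) are commensurable, after conjugation inside $\mathrm{Isom}(\HH{n})$, if and only if there is a $k$-isomorphism $\mathbb{G}_1 \to \mathbb{G}_2$ that is compatible with the embeddings into $\text{PO}(n,1)$ at the (unique) archimedean place where the groups are non-compact. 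This is the standard Borel--style commensurability criterion for arithmetic subgroups defined via quadratic forms; the hypothesis that every non-trivial Galois conjugate of each $q_i$ is positive definite guarantees that the place of $k$ lying over the chosen real place is the only one contributing a non-compact factor, so the identification happens over a single completion and descends to $k$.

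Next I would translate ``$\mathbb{SO}(q_1) \cong \mathbb{SO}(q_2)$ over $k$ and compatibly at the real place'' into a statement about the forms. The key algebraic fact is that for non-degenerate quadratic forms in $n+1 \ge 3$ variables, the orthogonal groups $\mathbb{O}(q_1)$ and $\mathbb{O}(q_2)$ are isomorphic as algebraic $k$-groups precisely when $q_1$ and $q_2$ are \emph{similar} over $k$, i.e. $q_1 \cong \lambda q_2$ over $k$ for some $\lambda \in k^*$; passing to $\mathbb{SO}$ and keeping track of the real place does not change this. One direction is immediate: if $A q_1 A^t = \lambda q_2$ with $A \in GL(n+1,k)$, then conjugation by $A$ sends $\mathbb{SO}(q_1)$ to $\mathbb{SO}(\lambda q_2) = \mathbb{SO}(q_2)$ (scaling a form does not change its orthogonal group), and this conjugation is defined over $k$ and induces an isometry $\HH{n}\to\HH{n}$ at the real place, so it conjugates $\text{SO}(q_1,\mathcal{O}_k)$ to a subgroup commensurable with $\text{SO}(q_2,\mathcal{O}_k)$. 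The converse direction uses that an abstract $k$-isomorphism of the orthogonal (or special orthogonal) groups of two forms of the same rank is, in this dimension range, induced by a similarity of the forms --- a classical result on automorphisms of classical groups (in the spirit of Dieudonn\'e), or alternatively it can be extracted from the structure of $\mathbb{SO}(q)$ together with the fact that its natural $(n+1)$-dimensional representation is recovered intrinsically up to a twist by $\lambda$.

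The main obstacle I anticipate is the converse direction and the careful bookkeeping at the archimedean place: one must ensure that the $k$-isomorphism produced by commensurability is \emph{the} one respecting the inclusions into $\text{PO}(n,1)$, i.e. that it does not merely abstractly identify the two algebraic groups but matches up their actions on $\HH{n}$. This is where the total-reality hypothesis and the positive-definiteness of the conjugates are essential: they pin down which real form of $\mathbb{SO}(q_i\otimes\mathbb{R})$ at each place we are looking at, and force the adjacency of the two lattices inside a single copy of $\mathrm{Isom}(\HH{n})$ to come from a genuine similarity of forms rather than from some outer automorphism or a permutation of archimedean factors. Assembling these pieces, with the easy direction above and the classical automorphism theorem for the hard direction, completes the proof. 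A full account would simply cite \cite{G-PS} Section~2.6, where precisely this criterion is proved; here we content ourselves with the outline just given.
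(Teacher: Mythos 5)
The paper gives no proof of this proposition at all: it is simply recalled from \cite[Section 2.6]{G-PS}, which is exactly the reference your outline ultimately defers to. Your sketch (commensurability of the arithmetic orbifolds yields a $k$-isomorphism of the special orthogonal groups compatible with the distinguished real place, and such an isomorphism forces similarity $q_1\cong\lambda q_2$ over $k$) is the standard argument behind that citation, so your treatment is essentially the same as the paper's.
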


Let us say that $q_1$ and $q_2$ are commensurable if $Aq_1A^{t} = \lambda q_2$ for some $A,\lambda$ as above.
In order to produce our building blocks we are required to exhibit six pairwise non-commensurable quadratic forms which become equivalent when restricting to some co-dimension one subspace. This problem has been considered in \cite{Raimbault}. However, the argument in \cite[Proposition 4.1]{Raimbault} contains a gap, as it relies on imprecise values of the Hilbert symbols $(\lambda,\lambda)$ and $(\lambda,-\sqrt{2}\lambda)$ for $\lambda\in\QQ_p^*$. We give an alternative construction:


Consider the following two families of quadratic forms.
In the isotropic case, let
\[ 
 q_a = ax_1^2 + x_2^2 + \cdots + x_n^2 - 2x_{n+1}^2 \quad (a\in \NN).
\]
Every $q_a$ is defined over $\QQ$ and has signature $(n,1)$. Moreover since we assume that $n\ge 3$ the form $q_a$ is $\QQ$-isotropic for every $a$. For $n\ge 4$ this follows from Meyer's theorem and for $n=3$ the substitution $x_1=0, x_{2}=x_{3}=x_{4}=1$ represents zero. Thus the corresponding arithmetic lattice is non-uniform.

In the anisotropic case let
\[ 
 r_a = ax_1^2 + x_2^2 + \cdots + x_n^2 - \sqrt{2}x_{n+1}^2 \quad (a\in \NN).
\]
Every $r_a$ is defined over the totally real field $k=\QQ(\sqrt{2})$. It is of signature $(n,1)$ and its unique conjugate is positive definite; in particular, it is $k$-anisotropic and the corresponding lattice is uniform.


\begin{lemma}
\label{lem:there_are_countably_many_non_commensurable_forms}
There are infinitely many integers $p_l\in \NN$ (resp. $s_l \in \NN$) such that the forms $q_{p_l}$ (resp. $r_{s_l}$) are pairwise non-commensurable.
\end{lemma}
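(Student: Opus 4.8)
The plan is to handle the two families $q_a$ and $r_a$ in parallel, using the commensurability criterion of Proposition~\ref{prop:condition_for_commensurability} together with the $p$-adic classification of quadratic forms (Theorem~\ref{thm:equivalence_of_quadratic_forms}) and the explicit Hilbert-symbol formula (Theorem~\ref{thm:Hilbert_symbol_for_padics}). By Proposition~\ref{prop:condition_for_commensurability}, two forms $q_a$ and $q_{a'}$ are commensurable if and only if $q_a \cong \lambda q_{a'}$ over $k$ for some $\lambda \in k^*$. So the strategy is: attach to each form a numerical invariant that is sensitive to the coefficient $a$ but insensitive to scaling by $\lambda$, and then choose an infinite sequence of values of $a$ along which this invariant takes infinitely many distinct values.

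The natural invariant to use is the Hasse--Witt invariant $\varepsilon_{\Qp}(q_a)$ at a suitable prime $p$ (for $r_a$, at a prime of $k=\QQ(\sqrt2)$), combined with the discriminant. First I would compute $\varepsilon_{\Qp}(q_a)$ in terms of $a$: since $q_a$ is diagonal with entries $a, 1, \ldots, 1, -2$, expanding the product $\prod_{i<j}\hilbert{a_i}{a_j}_{\Qp}$ collapses (most symbols involving only $1$'s and the fixed entry $-2$ are constant in $a$) to something of the shape $\hilbert{a}{-2}_{\Qp}$ times a constant, which by Theorem~\ref{thm:Hilbert_symbol_for_padics} is controlled by the Legendre symbol $\residue{a}{p}$ when $a$ is a $p$-adic unit. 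The key point is Lemma~\ref{lem:epsilon_is_multiplicative}: since $n+1$ is odd, as soon as we pick $\lambda$ with $\hilbert{\lambda}{\lambda}_{\Qp}=1$ — which holds automatically for \emph{every} $\lambda\in\Qp^*$ when $p$ is odd, because then $\hilbert{\lambda}{\lambda}_{\Qp}=\hilbert{\lambda}{-1}_{\Qp}\cdot\hilbert{\lambda}{-\lambda}_{\Qp}=\hilbert{\lambda}{-1}_{\Qp}$ and one checks $\hilbert{u}{-1}_p=1$ for units... — so $\varepsilon_{\Qp}(\lambda q_a)=\varepsilon_{\Qp}(q_a)$. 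Hence the pair $(\text{disc}, \varepsilon_p)$ localized at an odd prime $p$, possibly after also examining the discriminant class in $\Qp^*/(\Qp^*)^2$ which scales predictably under $\lambda$, gives a genuine commensurability invariant distinguishing infinitely many $q_a$. Concretely I would fix an odd prime $p$ not dividing $2$ and take the $p_l$ to run over an infinite set of primes (or integers) that are alternately quadratic residues and non-residues mod $p$, say, so that $\residue{p_l}{p}$ — and hence $\varepsilon_{\Qp}(q_{p_l})$ — is non-constant; since the invariant takes only finitely many values we in fact only get finitely many \emph{values}, so to get \emph{infinitely many pairwise non-commensurable} forms I would vary the prime $p$ as well, using for each new form a fresh prime at which its residue symbol differs from all previously constructed ones, invoking Dirichlet (or the Chebotarev density theorem alluded to in the acknowledgements) to guarantee enough primes with the prescribed splitting behavior.

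For the anisotropic family $r_a$ over $k=\QQ(\sqrt2)$ the argument is formally identical, working at finite places $\mathfrak p$ of $\Ok=\ZZ[\sqrt2]$: the form $r_a$ is diagonal with entries $a,1,\ldots,1,-\sqrt2$, so $\varepsilon_{k_{\mathfrak p}}(r_a)$ reduces to $\hilbert{a}{-\sqrt2}_{k_{\mathfrak p}}$ up to an $a$-independent constant, and again Lemma~\ref{lem:epsilon_is_multiplicative} makes this a commensurability invariant at odd $\mathfrak p$. One then picks an infinite sequence $s_l$ of rational primes, each splitting or inert in $k$ as convenient, realizing infinitely many distinct invariant-patterns. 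The main obstacle I anticipate is bookkeeping rather than conceptual: (i) verifying carefully that $\hilbert{\lambda}{\lambda}=1$ in the relevant local fields so that Lemma~\ref{lem:epsilon_is_multiplicative} applies — this is exactly the point where \cite{Raimbault} is said to have a gap, so the Hilbert symbols $\hilbert{\lambda}{\lambda}$ and $\hilbert{\lambda}{-\sqrt2\,\lambda}$ must be pinned down exactly via Theorem~\ref{thm:Hilbert_symbol_for_padics}, distinguishing the cases $p=2$, $p$ odd, and ramified versus unramified $\mathfrak p$ over $\QQ(\sqrt2)$; and (ii) organizing the diagonal of the Hasse--Witt product so the dependence on $a$ is isolated cleanly, and then passing from ``infinitely many values of an invariant'' to ``an infinite pairwise-non-commensurable family'' by a diagonal choice of auxiliary primes. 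Once those local computations are nailed down, producing the sequences $p_l$ and $s_l$ is a routine application of Dirichlet's theorem on primes in arithmetic progressions (or Chebotarev for the splitting conditions in $\QQ(\sqrt2)$).
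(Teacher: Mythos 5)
Your overall strategy --- make $\varepsilon_{\Qp}$ a commensurability invariant via Lemma \ref{lem:epsilon_is_multiplicative} and evaluate it at primes tailored to the coefficients --- is the paper's, but both of your key local computations are wrong, and they are exactly the delicate points. First, $\hilbert{\lambda}{\lambda}_{\Qp}=1$ is \emph{not} automatic for odd $p$: by Proposition \ref{prop:properties_of_hilbert_symbol}(3) one has $\hilbert{\lambda}{\lambda}_{\Qp}=\hilbert{\lambda}{-1}_{\Qp}$, and by Theorem \ref{thm:Hilbert_symbol_for_padics} this equals $\residue{-1}{p}^{m}$ where $m$ is the $p$-adic valuation of $\lambda$; so it is $-1$ whenever $p\equiv 3 \pmod 4$ and $m$ is odd (your check ``$\hilbert{u}{-1}_p=1$ for units'' only covers $\lambda$ of even valuation). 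One must restrict to primes with $\residue{-1}{p}=1$, and this is precisely the sort of imprecision about $\hilbert{\lambda}{\lambda}$ that created the gap in \cite{Raimbault}. Second, your mechanism for making the invariant vary with $a$ is broken: for odd $p$ and $a$ a $p$-adic unit, $\varepsilon_{\Qp}(q_a)=\hilbert{a}{-2}_{\Qp}=1$ no matter what $\residue{a}{p}$ is, since the Hilbert symbol of two $p$-adic units at an odd place is trivial. The invariant depends on $a$ only through the parity of its $p$-adic valuation, via $\varepsilon_{\Qp}(q_a)=\residue{-2}{p}^{m}$ with $m$ the valuation of $a$. Hence your plan of taking the $p_l$ alternately quadratic residues and non-residues modulo auxiliary primes distinguishes nothing: all those forms have $\varepsilon_{\Qp}=1$ at every such prime.

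The repair is to make the coefficient itself carry the valuation: take the $p_l$ to be distinct primes with $p_l\equiv 5\pmod 8$ and evaluate at $p=p_l$. Then $\residue{-1}{p_l}=1$ gives $\hilbert{\lambda}{\lambda}_{\QQ_{p_l}}=1$ for \emph{all} $\lambda$, so Lemma \ref{lem:epsilon_is_multiplicative} makes $\varepsilon_{\QQ_{p_l}}$ a commensurability invariant, while $\residue{2}{p_l}=-1$ yields $\varepsilon_{\QQ_{p_l}}(q_{p_l})=-1$ but $\varepsilon_{\QQ_{p_l}}(q_{p_{l'}})=1$ for $l'\neq l$, since there the valuation is $0$. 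In the anisotropic case the primes $s_l$ must satisfy $\residue{-1}{s_l}=\residue{2}{s_l}=1$ (so $k$ embeds in $\QQ_{s_l}$ and the scaling invariance holds for every $\lambda\in k^*$) together with $\residue{\sqrt{2}}{s_l}=-1$; the last condition says $2$ is a square but not a fourth power mod $s_l$, which is not a congruence condition, so Dirichlet alone does not produce such primes --- this is where Chebotarev (Lemma \ref{lem:infinitely_many_primes_as_needed}) or Gauss's biquadratic criterion is genuinely needed. Finally, your argument only applies when $n+1$ is odd (as Lemma \ref{lem:epsilon_is_multiplicative} requires); when $n+1$ is even the discriminant class, which you mention only in passing, is itself scaling-invariant and settles that case directly with distinct primes $p_l$.
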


\begin{proof}
We divide the proof into two separate cases, depending on the parity of $n$:

\medskip

\textbf{Case 1: $n+1$ is even.} This is an easier case and is essentially contained in \cite{G-PS}. As $n+1$ is even, the discriminant $D(q)$ is invariant under scalar multiplication of $q$, and hence is a well-defined invariant (valued in $\sfrac{k^*}{(k^*)^2}$) of the commensurability class of $q$. Since 
$$
 D(q_a)/D(q_b) \equiv a/b~(\text{mod}({k^*}^2))
$$ the result follows by taking the $p_l$ to be distinct primes. 

Similarly one obtains infinitely many pairwise non-commensurable forms of type $r_a$ by taking the $s_l$ to be rational primes which do not ramify in $k$.

\medskip

\textbf{Case 2: $n+1$ is odd.} In odd dimension the discriminant is not invariant under commensurability. Instead, we shall make use of the $\epsilon$-invariant.

In the isotropic case let $p_l \equiv 5\mod (8)$ for $l\in \NN$ be distinct rational primes which exist by Dirichlet's theorem on arithmetic progressions. It is well known that in this case 
$$
 \residue{-1}{p_l} = 1~\text{and}~\residue{2}{p_l} = -1~\text{for each}~ p_l.
$$

Consider a rational prime $p$.
First note that given $\lambda = u p^m,\lambda \in \Qp^*$ with $u$ a $p$-adic unit, by Theorem \ref{thm:Hilbert_symbol_for_padics}:
\[ 
 \hilbert{\lambda }{\lambda}_{\Qp} = \residue{-1}{p}^{m^2} \residue{u}{p}^{2m} = \residue{-1}{p} = 1.
\]
Therefore by Lemma \ref{lem:epsilon_is_multiplicative} we have that $\varepsilon_{\Qp}(\lambda q_a) = \varepsilon_{\Qp}(q_a)$. In other words $\varepsilon_{\Qp}$ is a well-defined invariant for the commensurability class of each $q_{a}$.

In order to calculate $\varepsilon_{\Qp}(q_a)$ write $a=up^m$, where $m$ is the $p$-adic valuation of $a$. Assuming that $p>2$ and $\residue{-1}{p}=1$ we have:
\[ 
 \varepsilon_{\Qp}(q_a) = \hilbert{a}{-2}_{\Qp} = \residue{-2}{p}^m = \residue{2}{p}^m = (-1)^m.
\]
In particular $\varepsilon_{\mathbb{Q}_{p_l}}(q_{p_l}) = -1$ while $\varepsilon_{\mathbb{Q}_{p_l}}(q_{p_{l'}}) = 1$ for $l'\neq l$. Therefore the forms $q_{p_l}$ are indeed pairwise non-commensurable over $\QQ$.

\medskip

In the anisotropic case we use Lemma \ref{lem:infinitely_many_primes_as_needed} below and choose primes $s_l$ for $l\in \NN$ satisfying:
\[
 \residue{-1}{s_l} = \residue{2}{s_l} = 1~\text{and}~\residue{\sqrt{2}}{s_l} = -1.
\]

Given a prime $p$ with $\residue{2}{p} = 1$ note that, by Hensel's lemma, $\sqrt{2}\in \Qp$ and so we can embed $k=\QQ(\sqrt{2})$ in $\Qp$ and argue as in the isotropic case. Again, for $p=s_{l}$ we have $\hilbert{\lambda}{\lambda}_{\Qp} = 1$ 
and hence $\varepsilon_{\Qp}(\lambda r_a)=\varepsilon_{\Qp}(r_a)$ for every $\lambda \in k^*$, implying that $\epsilon_{\Qp}$ is an invariant of the commensurability classes of forms of type $r_a$.
Furthermore, expressing $a=up^m$ where $u$ is a $p$-adic unit, we see that
\[ 
 \varepsilon_{\Qp}(r_a) = \hilbert{a}{-\sqrt{2}}_{\Qp} = \residue{-\sqrt{2}}{p}^m = \residue{\sqrt{2}}{p}^m = (-1)^m.
\]
As in the isotropic case, this implies that the $r_{s_l}$ are pairwise non-commensurable over $k$.

\end{proof}


We are left with proving the following:

\begin{lemma}
\label{lem:infinitely_many_primes_as_needed}
The are infinitely many distinct primes $p$ such that 
$$
 \residue{-1}{p} = \residue{2}{p} = 1 \, , \, \residue{\sqrt{2}}{p} = -1.
$$
\end{lemma}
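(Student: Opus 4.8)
The plan is to interpret the three conditions $\residue{-1}{p}=1$, $\residue{2}{p}=1$, $\residue{\sqrt{2}}{p}=-1$ as Frobenius conditions in a suitable number field and then invoke the Chebotarev density theorem (which is exactly the tool the acknowledgement credits Tomer Schlank for pointing out). First I would observe that for a rational prime $p$ unramified in $\QQ(\sqrt 2)$, one has $\residue{2}{p}=1$ if and only if $2$ is a square mod $p$, i.e. if and only if $p$ splits in $\QQ(\sqrt 2)$. Assuming $\residue{2}{p}=1$, the condition $\residue{\sqrt 2}{p}=-1$ makes sense (as explained in the proof of Lemma \ref{lem:there_are_countably_many_non_commensurable_forms}, $\sqrt 2\in\Qp$ in this case) and says that the element $\sqrt 2\in\Zp^*$, reduced mod $p$, is a non-square in $\Fp^*$; equivalently $\sqrt 2$ is not a square in the residue field, which translates into a splitting condition in the field $L=\QQ(2^{1/4})=\QQ(\sqrt[4]{2})$. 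Finally $\residue{-1}{p}=1$ is the splitting condition for $p$ in $\QQ(i)$, equivalently $p\equiv 1\pmod 4$.

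Concretely, I would work in the compositum $K=\QQ(i,\sqrt[4]{2})$, which is a Galois extension of $\QQ$ of degree $8$ with Galois group the dihedral group $D_4$. The key computation is to identify the conjugacy class (here just an element, since we will want a specific Frobenius) of $\Gal(K/\QQ)$ that encodes all three conditions: we need $p$ to split completely in $\QQ(i)$ and in $\QQ(\sqrt 2)$ (so that both $-1$ and $2$ are squares mod $p$) but \emph{not} to split completely in $\QQ(\sqrt[4]{2})$ — rather, the quartic $x^4-2$ should factor mod $p$ into two irreducible quadratics, which is the precise meaning of $2$ being a square but $\sqrt 2$ a non-square mod $p$. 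Unramified primes with this factorization behavior correspond to a nontrivial conjugacy class of $\Gal(K/\QQ)$, namely a non-identity element $\sigma$ fixing the subfield $\QQ(i,\sqrt 2)$ (so $\sigma$ is the order-$2$ element of $\Gal(K/\QQ(i,\sqrt 2))$). Since $\Gal(K/\QQ)\cong D_4$ and $\QQ(i,\sqrt 2)$ is the unique degree-$4$ subfield over which $K$ is degree $2$ with that property, such $\sigma$ exists and is non-identity, so its conjugacy class is nonempty.

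By the Chebotarev density theorem applied to $K/\QQ$, the set of rational primes $p$ (unramified in $K$, hence in particular $p\nmid 2$) whose Frobenius conjugacy class equals that of $\sigma$ has positive natural density, in particular is infinite. Unwinding the correspondence, each such $p$ satisfies $\residue{-1}{p}=\residue{2}{p}=1$ and $\residue{\sqrt 2}{p}=-1$, which is exactly what is claimed.

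I expect the main obstacle to be the bookkeeping in the second paragraph: correctly pinning down that the three Legendre-symbol conditions jointly correspond to a \emph{single nonempty} conjugacy class in $\Gal(K/\QQ)$, and in particular checking that the constraints are not contradictory (e.g. verifying that requiring $2$ to be a square while $\sqrt 2$ is a non-square is genuinely realized by some decomposition type of the quartic, rather than being vacuous). Once the right Galois group $D_4$ and the right element $\sigma$ are identified, the appeal to Chebotarev is immediate. An alternative, more elementary route — worth mentioning as a fallback — is to phrase everything purely in terms of congruences: one shows that the conditions cut out a union of residue classes modulo $16$ (or modulo a small power of $2$ times the relevant conductor), and then invokes Dirichlet's theorem on primes in arithmetic progressions instead of Chebotarev; this avoids Galois theory at the cost of a slightly fussier congruence analysis of when $\sqrt 2 \bmod p$ is a square.
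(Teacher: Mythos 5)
Your main argument is correct and is essentially the paper's own proof: the same splitting field $\QQ(i,\sqrt[4]{2})$ of $x^4-2$, the same dihedral Galois group of order $8$, the same element $\sigma$ fixing $\QQ(i,\sqrt{2})$ and sending $\sqrt[4]{2}\mapsto-\sqrt[4]{2}$ (whose class is a singleton since $\sigma$ is central, which settles the ``bookkeeping'' you worry about), and the same appeal to Chebotarev. One caveat: the ``elementary fallback'' you mention would not work, because $\QQ(i,\sqrt[4]{2})/\QQ$ is non-abelian, so the condition that $2$ be a square but not a fourth power modulo $p$ is not a union of congruence classes of $p$ to any modulus and Dirichlet's theorem cannot replace Chebotarev here --- this is exactly why the paper later invokes Gauss's criterion $p=x^2+64y^2$, which is not a congruence condition, to produce explicit primes.
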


\begin{proof}
Denote $K = \QQ(\sqrt{2},i)$ and let $L=\QQ(\sqrt[4]{2},i)$ be the splitting field of $x^4-2$ over $\QQ$, so that $\QQ \subset K \subset L$ are Galois extensions. Let $G = \Gal(L/\QQ) \cong D_8$ and let $H = I(K) \vartriangleleft G$ be the subgroup of $G$ fixing $K$. Then $H=\{e,\sigma \}$ with $\sigma(\sqrt[4]{2}) = -\sqrt[4]{2}$ and $ \{ \sigma \}$ being a conjugacy class of $G$.

The Chebotarev density theorem (see \cite{Mil}) states that as the prime $p$ varies, the associated Frobenius element $\sigma_p$ corresponds to any given conjugacy class $C$ of $G$ with density proportional to the size of $C$. Thus, for infinitely many primes (indeed, for a set of density $\frac{1}{8}$) the Frobenius element $\sigma_p$ for $L/\QQ$ equals $\sigma$. In particular, the restricted Frobenius element $\bar{\sigma}_p$ for $K/\QQ$ is the identity. This implies that $p$ splits in both extensions $\QQ(i)$ and $k=\QQ(\sqrt{2})$ while every prime $\mathfrak{p}$ lying over $p$ in $K$ does not split in $L$. In particular $\sqrt{-1}, \sqrt{2} \in \mathbf{F}_p$ while $\sqrt[4]{2} \notin \mathbf{F}_p$, as required.
\end{proof}

\medskip

For our purpose it suffices to exhibit six primes satisfying the conditions of Lemma \ref{lem:there_are_countably_many_non_commensurable_forms}. Supposing that $n+1$ is odd, in the isotropic case one can take 
$$ 
 p_1 = 5, \, p_2 = 13, \, p_3 = 29, \, p_4 = 37, \, p_5 = 53, \, p_6 = 61. 
$$

Moreover, the conditions $\residue{-1}{p} = \residue{2}{p} = 1$ are equivalent to $p \equiv 1 \mod (8)$.
Recall the following:\\

\medskip

{\bf Theorem (Gauss)} {\it For a prime $p\equiv 1 \mod (4)$, the equation $x^4\equiv 2 \mod (p)$ has an integer solution if and only if $p=x^2 + 64y^2$ for some $x,y\in \NN$.}

\medskip
In view of the above, it is not hard to verify that the primes
$$ 
 s_1 = 17,\, s_2 = 41,\, s_3 = 97,\, s_4 = 137,\, s_5 = 193,\, s_6 = 241
$$
satisfy the requirements in the anisotropic case.

\medskip

{\bf Final conclusions:} Note that all the forms of type $q_a$, and similarly all the forms of type $r_a$, are pairwise isometric when restricted to the hyperplane $\{ x_1=0\}$. Therefore, by Lemma \ref{lem:there_are_countably_many_non_commensurable_forms} there are six non-commensurable manifolds $V_0,V_1, A^{\pm},B^{\pm}$ (see also Remark \ref{rem:torsion}) which are simultaneously all compact or all non-compact and satisfy together the conditions of Section \ref{sec:GoS}. Indeed, in view of Proposition \ref{prop:can_find_disjoint_nonseperating_lifts} we may assume that these manifolds have the required number of boundary components.




%


\section{Concluding the proof of Theorem \ref{thm:main}}

\begin{proof}[Proof of Theorem \ref{thm:main}]
For the lower bound on $C_n^\textrm{c}(v)$ use the parcel of compact building blocks, and for the lower bound on $C_n^\textrm{nc}(v)$ use the parcel of non-compact building blocks.
For each decorated graph $\Delta$ with a single colored vertex, consider the manifold $M_\Delta$. By Proposition \ref{prop:twoMGammasarenotcommensurable}, $M_\Delta$ and $M_{\Delta'}$ are non-commensurable whenever $\Delta$ and $\Delta'$ are not isomorphic. 
The result follows from Theorem \ref{thm:growth_of_subgroups_of_free_group}.
\end{proof}

\begin{rem}\label{rem:const-a}
Interestingly, the proof of the lower bound in \cite{BGLM} also appeals to Theorem \ref{thm:growth_of_subgroups_of_free_group} as well as to the existence of an exotic manifold constructed by gluing of pieces as above (or as was originally done in \cite{G-PS}). Thus the constant $a$ in the expression $v^{av}$ obtained in \cite{BGLM} and the one we obtain here are somewhat comparable. 
\end{rem}



\end{document}